\numberwithin{equation}{section}
\numberwithin{figure}{section}
\def\R{\mathbb{R}}
\def\Z{\mathbb{Z}}
\def\dH{\dim_{\mathcal{H}}}
\renewcommand\leq{\leqslant}
\renewcommand\geq{\geqslant}
\newcommand{\supp}{\operatorname{supp}}
\newcommand{\diam}{\operatorname{diam}}
\newcommand{\dist}{\operatorname{dist}}
\theoremstyle{plain}
\newtheorem{thm}{Theorem}[section]
\newtheorem{lem}[thm]{Lemma}
\newtheorem{prop}[thm]{Proposition}
\newtheorem{conj}[thm]{Conjecture}
\newtheorem*{claim*}{Claim}
\newtheorem*{thm*}{Theorem}
\theoremstyle{definition}
\newtheorem{definition}[thm]{Definition}
\newtheorem*{definition*}{Definition}
\newtheorem*{remarks*}{Remarks}
\newtheorem*{remark*}{Remark}
\newenvironment{enumerate-math}
{\begin{enumerate}
\addtolength{\itemsep}{5pt}
}
{\end{enumerate}}
\newenvironment{enumerate-text}
{\begin{enumerate}
\addtolength{\itemsep}{5pt}
}
{\end{enumerate}}
\begin{document}

\title{Intersection between pencils of tubes, discretized sum-product, and radial projections}

\author{Bochen Liu, Chun-Yen Shen}
\address{National Center for Theoretical Sciences, No. 1 Sec. 4 Roosevelt Rd., National Taiwan University, Taipei, 106, Taiwan}
\email{Bochen.Liu1989@gmail.com}
\email{cyshen@math.ntu.edu.tw}

\subjclass[2010]{28A75, 11B30}
\date{}

\keywords{pencil of tubes, sum-product, tube condition, radial projection}

\begin{abstract}
In this paper we prove the following results in the plane. They are related to each other, while each of them has its own interest.

First we obtain an $\epsilon_0$-increment on intersection between pencils of $\delta$-tubes, under non-concentration conditions. In fact we show it is equivalent to the discretized sum-product problem, thus the $\epsilon_0$ follows from Bourgain's celebrated result.

Then we prove a couple of new results on radial projections. We also discussion about the dependence of $\epsilon_0$ and make a new conjecture.

A tube condition on Frostman measures, after careful refinement, is also given.
\end{abstract}
\maketitle

\section{Introduction}
\subsection{Radial projections}
Dimension of projections has become one of the most popular topics in geometric measure theory. It dates back to Marstrand's celebrated 1954 paper \cite{Mar54}, where he proved his well-known Marstrand projection theorem: let $\pi_e(x)=x\cdot e$, $e\in S^1$, $x\in \R^2$, denote the orthogonal projection, then for any Borel set $E\subset\R^2$,
\begin{itemize}
 	\item if $\dH E>1$, then $|\pi_e (E)|>0$ for almost all $e\in S^1$;
 	\item if $\dH E\leq 1$, $\dH \pi_e (E)=\dH E$ for almost all $e\in S^1$.
\end{itemize}

Marstrand's original proof is very complicated. In 1968, Kaufman \cite{Kau68} gave a much simpler proof via potential theory and Fourier analysis. Moreover, he obtained the sharp dimensional exponent on the exceptional set: if $\dH E\leq 1$, then \begin{equation}\label{orthogonal-exception-2}\dH\{e\in S^1 :\dH \pi_e (E)<\dH E\}\leq \dH E.\end{equation}
When $\dH E>1$, the sharp dimensional exponent on the exceptional set is due to Falconer \cite{Fal82}: if $\dH E>1$, then \begin{equation}\label{orthogonal-exception-1}\dH\{e\in S^1 :|\pi_e (E)|=0\}\leq 2-\dH E.\end{equation}
One can see \emph{Example 5.13} in \cite{Mat15}, and \cite{KM75} for sharpness examples.

It is still not clear that, given an arbitrary $\tau<\dH E$, for how many $e\in S^1$ one can expect $\dH \pi_e (E)\geq\tau$? Alternatively, given arbitrary sets $E\subset\R^2$, $\Omega\subset S^1$, how large $\max_{e\in\Omega}\dH\pi_e(E)$ can be guaranteed? This problem is far from being solved. A result of Oberlin \cite{Obe12} implies that
\begin{equation}\label{Bourgain-half}\dH \{e\in S^1: \pi_e(E)< \frac{\dH E}{2}\}=0,\end{equation}
which itself is in fact trivial (see Section \ref{app-tube-condition}). A stronger and much deeper version is due to Bourgain \cite{Bou10}: suppose $E\subset\R^2$, $\Omega\subset S^1$, $\dH E\in(0,2)$, $\dH \Omega>0$, then there exists $e\in \Omega$ such that
\begin{equation}\label{Bourgain-half-epsilon}\dH \pi_e(E)\geq \frac{\dH E}{2}+\epsilon_0(\dH E, \dH \Omega),\end{equation}
where $\epsilon_0>0$ is an absolute constant that only depends on $\dH E, \dH\Omega$. 

For any $0<\alpha<\dH E$, there are examples \cite{KM75} with 
$$\dH \{e\in S^1: \pi_e(E)< \frac{\dH E+\alpha}{2}\}=\alpha. $$Therefore \eqref{Bourgain-half} is sharp, and the dependences on both $\dH E, \dH\Omega$ (especially on $\dH \Omega$) of $\epsilon_0$ in \eqref{Bourgain-half-epsilon} are necessary.

For orthogonal projections in higher dimensions, we refer to Chapter 4, 5 in \cite{Mat15}, and \cite{He17}.

Back to Marstrand's 1954 paper. In addition to orthogonal projections, Marstrand also studied radial projections in the plane, although he did not name it radial projection and stated his results in a different way. One can also see \cite{FFJ15} for Marstrand's original wording.

For any $y\in\mathbb{R}^d$, $d\geq 2$, let $\pi^y: \mathbb{R}^d\backslash\{y\}\rightarrow S^{d-1}$ denote the radial projection
$$\pi^y(x)=\frac{x-y}{|x-y|}.$$
Marstrand \cite{Mar54} proved that, given a Borel set $E\subset\R^2$, $0<\mathcal{H}^s(E)<\infty$ for some $s>1$, then $\mathcal{H}^1(S^1\backslash \pi^y(E))=0$ for $\mathcal{H}^s$ almost all $y\in E$. People say $E$ is visible from $y$ if $\mathcal{H}^{d-1}(\pi^y(E))>0$ (see, for example, Mattila's survey \cite{Mat04}).

It is natural to compare radial projections with orthogonal projections, and see if any result above has an analog. Notice that orthogonal projections can be seen as radial projections with pins in the hyperplane at infinity.

Unlike orthogonal projections, the development on radial projections is quite slow. The sharp analog of \eqref{orthogonal-exception-1} was not known until recently, when Orponen \cite{Orp18} \cite{Orp19} proved for any Borel set $E\subset \R^d$, $\dH E>d-1$,
\begin{equation}\label{sharp-radial-projection}\dim_{\mathcal{H}}\left\{y\in\mathbb{R}^d: \mathcal{H}^{d-1}\left(\pi^y(E)\right)=0\right\}\leq 2(d-1)-\dH E.\end{equation}
Later his quantitative estimate in \cite{Orp19} played an important role in the breakthrough on the Falconer distance conjecture \cite{KS18}, \cite{GIOW18}, \cite{Shm18}, thus more attention was drawn to radial projections. 


When $\dH E\leq d-1$, the sharp analog of \eqref{orthogonal-exception-2} is still unknown in any dimension. The best known results are, given $E\subset\R^d$, $\dH E\leq d-1$,
$$\dH\{y\in\R^d: \dH \pi^y (E)<\dH E \}\leq \min\{\dH E+1,\  2(d-1)-\dH E \},$$
where the first bound follows from a general machinery of Peres and Schlag \cite{PS00}, and the second is due to the first author \cite{Liu19}. The following conjecture was proposed by the first author in \cite{Liu19}. It is generally sharp because $E$ could lie in a $k$-dimensional affine subspace. 
\begin{conj}[B.L., 2019]\label{Liu-conj}
Suppose $E\subset\R^d$ is a Borel set, $\dH E\in(k-1, k]$, $k=0,1,\dots, d-1$. Then
	$$\dH\{y\in\R^d: \dH \pi^y (E)<\dH E \}\leq k.$$
\end{conj}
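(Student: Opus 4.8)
The plan is to separate the degenerate configuration in which $E$ lies in a $k$-plane from the generic one, to settle the former directly, and to attack the latter by a $\delta$-discretisation that turns the problem into an incidence count between pencils of $\delta$-tubes, where the $\epsilon_0$-increment of the present paper---equivalently Bourgain's discretised sum--product theorem---can be invoked. The case $E\subset V$ with $V$ a $k$-dimensional affine subspace is immediate: if $y\notin V$, then $V\cup\{y\}$ spans a $(k+1)$-plane and $\pi^y|_V$ is a smooth injection of $V$ onto a relatively open piece of a $k$-dimensional subsphere of $S^{d-1}$ with everywhere non-degenerate differential, hence locally bi-Lipschitz, so $\dH\pi^y(E)=\dH E$; thus $\{y:\dH\pi^y(E)<\dH E\}\subset V$, of dimension $\le k$. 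Henceforth assume $E$ is not contained in any $k$-plane, and---after passing to a compact subset supporting a Frostman measure $\mu$ with $\mu(B(x,r))\lesssim r^{\dH E-\epsilon}$---use this hypothesis to arrange also a uniform non-concentration estimate $\mu(V^{(\rho)})\lesssim\rho^{\kappa}$ over all $k$-planes $V$, for some $\kappa>0$, where $V^{(\rho)}$ is the $\rho$-neighbourhood.

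The strategy is then to argue by contradiction: suppose the exceptional set $Y$ has $\dH Y=t>k$. Fix parameters $\sigma<s'<s:=\dH E$ and $t'\in(k,t)$. A standard pigeonholing---first making the bound $\dH\pi^y(E)<s$ quantitatively uniform on a large subset of $Y$, then localising to a single fine scale $\delta$---produces a $\delta$-separated $(\delta,t')$-set $Y_\delta$ with $\#Y_\delta\approx\delta^{-t'}$, a $\delta$-separated $(\delta,s')$-set $E_\delta\subset E$ with $\#E_\delta\approx\delta^{-s'}$ inheriting the non-concentration on $k$-planes, and, for every $y\in Y_\delta$, a pencil $\mathcal{P}_y$ of $\approx\delta^{-\sigma}$ tubes of width $\delta$ through $y$ whose union covers $E_\delta$. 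The key structural input is that the non-concentration of $E_\delta$ on $k$-planes transfers to each pencil: a sector of aperture $\rho$ at $y$ lies in an $O(\rho)$-neighbourhood of a $k$-plane through $y$ and therefore may contain only a controlled fraction of the tubes of $\mathcal{P}_y$---were it otherwise, $E_\delta$ would concentrate in that neighbourhood. This is exactly the non-concentration hypothesis under which the pencil-intersection estimate of this paper yields a gain of an absolute $\epsilon_0=\epsilon_0(s,t)>0$.

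It remains to turn the over-determination of $E_\delta$ by the pencils $\{\mathcal{P}_y\}_{y\in Y_\delta}$ into a contradiction. A first, crude double count---of near-collinear triples $(y,x,x')\in Y_\delta\times E_\delta\times E_\delta$, with $x,x'$ forced to share a tube of $\mathcal{P}_y$ for many pairs, weighed against how many pins of $Y_\delta$ can lie in a single $\delta$-tube---only reproduces the Peres--Schlag exponent $\dH E+1$. To descend to the sharp value $k$ one must feed in the $\epsilon_0$-gain: for two well-separated non-concentrated pencils the pencil-intersection estimate caps $\bigl|\bigcup_{T\in\mathcal{P}_y,\ T'\in\mathcal{P}_{y'}}(T\cap T')\bigr|$ below the trivial bound by $\delta^{\epsilon_0}$, hence limits how many points of $E_\delta$ two distinct pins can jointly confine; summing this gain over the $\approx\delta^{-2t'}$ pairs of pins over-determines $E_\delta$ once $t'>k$, and since $\sigma,s',t'$ were essentially free one concludes $t\le k$, contradicting $t>k$. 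Carrying out this summation carefully---extracting the right bilinear quantity so that a single $\delta^{\epsilon_0}$ from each pair actually accumulates---is where the real work lies.

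Two points in this scheme are genuinely hard, and they are precisely why the conjecture remains open. First, securing the \emph{uniform} non-concentration of the pencils from the bare hypothesis that $E$ avoids $k$-planes: the adversary is an $E$ that hugs distinct $k$-planes at distinct scales, which collapses the pencils and makes the $\epsilon_0$-gain unavailable, so a multi-scale analysis modelled on the proof of Bourgain's projection theorem seems to be required. Second, converting the qualitative $\epsilon_0$-increment into the exact exponent $k$ rather than $k+O(\epsilon_0')$: this cannot be formal, since Bourgain's theorem supplies no effective value of $\epsilon_0$, so one needs an iteration/bootstrapping in the manner of the discretised sum--product literature to drive the exponent to its sharp value. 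Finally, for $d>2$ one would reduce to the plane by slicing with a generic affine subspace of the appropriate dimension---where Marstrand's slicing theorem controls $\dH(E\cap W)$---or by composing with radial projections onto lower-dimensional spheres, and reassemble the exceptional pins by a Fubini argument over the relevant Grassmannian; the dimensional bookkeeping, and the preservation of ``not contained in a $k$-plane'' under the reduction, are the only additional complications.
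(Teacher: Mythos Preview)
The statement you are addressing is labelled a \emph{Conjecture} in the paper and is presented there as an open problem; the paper contains no proof of it. The strongest partial result in the paper towards the planar case $k=1$ is Theorem~\ref{main-thm-pin-radial-proj}, which gives only
\[
\dH\Bigl\{y:\dH\pi^y(E)<\tfrac{\dH E}{2}+\epsilon_0\Bigr\}\le 1,
\]
i.e.\ the threshold $\tfrac{\dH E}{2}+\epsilon_0$ rather than $\dH E$. There is thus no paper proof to compare against, and your submission is---as you yourself concede in the penultimate paragraph---a research programme with acknowledged gaps, not a proof.

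Your diagnosis of the two essential obstacles (uniform non-concentration of pencils across scales, and promoting a qualitative $\epsilon_0$ to the sharp endpoint) is accurate and matches what the paper's methods can and cannot deliver. One further technical issue deserves mention: the pencil-intersection estimate of the paper, Theorem~\ref{main-pencil}, requires \emph{four} pencils whose tips do not lie in a common thin tube; for two or three pencils there is no gain over the trivial bound, as the counterexamples in Figure~\ref{eg-pencil-line} demonstrate. Hence your heuristic that ``two distinct pins can jointly confine'' only a deficient number of points of $E_\delta$ does not follow from the paper's theorem. One must instead pass, as in Section~7, to quadruples of pins via repeated Cauchy--Schwarz and then force those four pins into general position---and in the proof of Theorem~\ref{main-thm-pin-radial-proj} it is precisely this step that produces the dichotomy rather than an unconditional bound, so even in the plane your outline does not yet recover what the paper actually proves, let alone the full conjecture.
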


In the plane, it is trivial if $E$ lies in a line, so we always assume $E$ is not  contained in a line, by which we mean $\dH (E\backslash l)= \dH E$ for any line $l$.

An analog of \eqref{Bourgain-half} was recently proved by Orponen \cite{Orp19}: suppose $E\subset\R^2$ is a Borel set, not  contained in a line, then
\begin{equation}\label{Orponen-half}\dH\{y\in\R^2: \dH \pi^y(E)< \frac{\dH E}{2}\}=0.\end{equation}
In particular the set of of directions
$$S(E):=\left\{\frac{x-y}{|x-y|}:x, y\in E, x\neq y\right\}$$
has Hausdorff dimension at least $\frac{\dH E}{2}$. The following conjecture was then made in \cite{Orp19}, Conjecture 1.9.

\begin{conj}
	[Orponen, 2019]\label{Orponen-conj}
	Suppose $E\subset$ is a Borel set in the plane, not  in a line. Then 
	$$\dH S(E):=\dH \left\{\frac{x-y}{|x-y|}:x, y\in E, x\neq y\right\}=\min\{\dH E, 1\}.$$
\end{conj}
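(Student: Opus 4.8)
\medskip
\noindent\emph{Strategy towards Conjecture~\ref{Orponen-conj}.}
Since $S(E)\subseteq S^{1}$ the bound $\dH S(E)\leq 1$ is automatic, so the content is the lower bound $\dH S(E)\geq\min\{\dH E,1\}$. When $\dH E>1$ this is classical: pick $s\in(1,\dH E)$ and a compact subset of $E$ of positive finite $\mathcal{H}^{s}$-measure; by Marstrand's radial projection theorem \cite{Mar54}, $\mathcal{H}^{1}(S^{1}\setminus\pi^{y}(E))=0$ for $\mathcal{H}^{s}$-a.e.\ $y\in E$, and $\pi^{y}(E\setminus\{y\})\subseteq S(E)$. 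So assume $\dH E\leq 1$; we aim for $\dH S(E)\geq\dH E$. Fix $\sigma<\dH E$ close to $\dH E$ and a Frostman measure $\mu$ on $E$ with $\mu(B_{r})\lesssim r^{\sigma}$; because $E$ is not contained in a line, for any prescribed line $\ell$ the measure $\mu$ does not concentrate in a thin neighbourhood of $\ell$.

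The base of the argument is \eqref{Orponen-half}, giving $\dH S(E)\geq\dH E/2$. We then run an increment scheme. Assume for contradiction that $t:=\dH S(E)<\dH E\ (\leq 1)$ and discretise at a small scale $\delta$. If the push-forward $\pi^{x}_{*}\mu$ were $(t+\eta)$-Frostman for some pin $x\in\supp\mu$, then $\dH S(E)\geq\dH\pi^{x}(E)\geq t+\eta$, a contradiction; so, after pigeonholing, for a $\mu$-positive set of pins $x$ there is a $\delta$-tube $T_{x}$ through $x$ with $\mu(T_{x})\gtrsim\delta^{t+\eta}$ whose direction lies in $S(E)$. As $\mu$ is $\sigma$-Frostman with $\sigma$ near $\dH E$, these pins fill a set of dimension $\approx\dH E$ while the directions of the $T_{x}$ lie in the $(\leq t)$-dimensional set $S(E)$: this is exactly a pencil-of-tubes configuration, with the smallness $\mu(T_{x}\cap T_{x'})\lesssim\delta^{\sigma}$ of the overlap of two transverse heavy tubes in tension with $\mu(T_{x})\mu(T_{x'})\gtrsim\delta^{2(t+\eta)}$. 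The non-concentration of $\mu$ excludes the degenerate alternatives (all $T_{x}$ essentially parallel, or all through one point), so the hypotheses of the $\epsilon_{0}$-increment for intersections between pencils of $\delta$-tubes proved here — equivalently, by the equivalence established earlier in the paper, of Bourgain's discretised sum--product theorem — are met, and we obtain $\dH S(E)\geq t+\epsilon_{0}(t,\dH E)$, contradicting the maximality of $t$; hence $\dH S(E)\geq\dH E$.

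Making this rigorous requires propagating the Frostman exponent and the non-concentration quality through the scheme, and the cases $\dH E<1$ and $\dH E=1$ then diverge sharply. If $\dH E<1$, the target $\dH E$ is reached in finitely many steps, since $\epsilon_{0}(t,\dH E)\geq\epsilon_{0}(\dH E/2,\dH E)>0$ uniformly over $t\in[\dH E/2,\dH E]$; only boundedly many applications are needed, so the small losses in $\sigma$ incurred at each step are absorbed by choosing $\sigma$ sufficiently close to $\dH E$, and the conjecture follows whenever $\dH E\neq 1$. If $\dH E=1$, however, reaching $t=1$ demands infinitely many applications ($t_{k+1}=t_{k}+\epsilon_{0}(t_{k},1)$ increases to $1$ without ever equalling it), while the single-scale increment is valid only for $\delta$ below a threshold depending on the current non-concentration parameters, and those parameters are depleted along the iteration — so one cannot pass to the limit. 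This endpoint, $\dH S(E)=1$ once $\dH E=1$, is the main obstacle; resolving it seems to need either a form of the pencils-of-tubes increment whose gain and range of validity remain uniform as the configuration approaches the critical thresholds (close to the refined conjecture proposed in the present paper), or a different argument near $t=1$ — for instance coupling the tube-intersection bound with Orponen's quantitative estimate \eqref{sharp-radial-projection} for $\dH E$ slightly above $1$ and letting $\dH E\downarrow 1$.
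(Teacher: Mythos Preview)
First, note that the statement you are addressing is a \emph{conjecture}: the paper does not prove it, and only establishes the single $\epsilon_0$-increment $\dH S(E)\geq \dH E/2+\epsilon_0(\dH E)$ (Theorem~\ref{main-thm-radial-proj}). Your treatment of the case $\dH E>1$ via Marstrand is fine; the proposal for $\dH E\leq 1$ is where the trouble lies.

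The genuine gap is the bootstrap. You assert that from $t:=\dH S(E)<\dH E$ one can extract $\dH S(E)\geq t+\epsilon_0(t,\dH E)$ and iterate; but neither the pencil-of-tubes estimate (Theorem~\ref{main-pencil}) nor the equivalent discretized sum--product theorem yields an increment over the \emph{current} value $t$. Concretely: assuming $\dH S(E)<\sigma$ makes the four pencils $(\delta,\sigma)$-pencils, and Theorem~\ref{main-pencil} gives $|\bigcap P_i|\lesssim \delta^{2-2\sigma+\epsilon_0(\sigma)}$; the only lower bound available is $|E_\delta|\gtrsim \delta^{2-s}$ from the $s$-Frostman condition (this is exactly what drives \eqref{balance-1} in the paper's proof). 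Comparing, one gets $2-s\geq 2-2\sigma+\epsilon_0$, i.e.\ $\sigma\geq s/2+\epsilon_0/2$, which is precisely $\dH S(E)\geq s/2+\epsilon_0/2$ --- an increment anchored at $s/2$, independent of the hypothetical value $t$. Nothing in the input side of the pencil estimate improves when you additionally know $\dH S(E)\geq t$: the hypothesis enters only as the upper bound $\sigma>t$ on the pencil parameter, and taking $\sigma$ close to $t$ makes the \emph{upper} bound $\delta^{2-2\sigma+\epsilon_0}$ larger, not smaller. Your heavy-tube heuristic (``$\mu(T_x\cap T_{x'})\lesssim \delta^{\sigma}$ in tension with $\mu(T_x)\mu(T_{x'})\gtrsim\delta^{2(t+\eta)}$'') recovers only the trivial two-pencil bound $t\geq s/2$ and is not a four-pencil/sum--product configuration at all. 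Consequently your claim that for $\dH E<1$ ``the target $\dH E$ is reached in finitely many steps'' is unsupported: there is no second step. The full conjecture remains open in the paper for every $\dH E\in(0,1]$, not just at the endpoint $\dH E=1$.
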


In this paper we obtain an $\epsilon_0$-increment towards this conjecture.
\begin{thm}
	\label{main-thm-radial-proj}
	Given $0<s<2$, there exists $\epsilon_0=\epsilon_0(s)>0$ such that the following holds.

	Suppose $E\subset\R^2$ is a Borel set, $0<\dH E<2$, not  contained in a line. Then the set of directions
	$$S(E):=\left\{\frac{x-y}{|x-y|}:x, y\in E, x\neq y\right\}$$
	has Hausdorff dimension at least $\frac{\dH E}{2}+\epsilon_0(\dH E)$.
\end{thm}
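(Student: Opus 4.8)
The plan is to run a standard discretization and pigeonholing to reduce the theorem to a statement about intersections of two pencils of $\delta$-tubes, and then invoke the equivalence with Bourgain's discretized sum-product theorem (the first main result of this paper, as announced in the abstract). First I would fix $s=\dH E$ and, after a standard reduction, replace $E$ by a $\delta$-discretized set $E_\delta$ which is $(\delta,s)$-regular (carrying a Frostman measure $\mu$ with $\mu(B(x,r))\lesssim r^s$) and which is \emph{not concentrated near any line} — here the hypothesis that $E$ is not contained in a line, upgraded to $\dH(E\setminus l)=\dH E$ for every line $l$, is exactly what supplies the quantitative non-concentration condition at scale $\delta$. Suppose for contradiction that $\dH S(E) < s/2 + \epsilon_0$ for a small $\epsilon_0$ to be chosen; then the set of directions spanned by $E$ can be covered by $\lesssim \delta^{-(s/2+\epsilon_0)}$ arcs of length $\delta$.

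Next I would set up the two pencils. Pick two well-separated points $y_1, y_2 \in E$ (possible by the non-line-concentration, after throwing away a small exceptional set of ``bad pins'' using \eqref{Orponen-half} or a direct counting argument). For each $i$, the radial projection $\pi^{y_i}$ sends $E_\delta$ into the small union of $\delta$-arcs; pulling back, $E_\delta$ is covered by the pencil $\mathcal{T}_i$ of $\delta$-tubes through $y_i$ in those $\lesssim \delta^{-(s/2+\epsilon_0)}$ directions. Thus $E_\delta$ lies in $\bigl(\bigcup \mathcal{T}_1\bigr)\cap\bigl(\bigcup\mathcal{T}_2\bigr)$, and since $|E_\delta|\approx \delta^{-s}$ (counting $\delta$-squares), the two pencils must have an anomalously large intersection: a union of $\gtrsim \delta^{-s}$ $\delta$-squares, each lying in at least one tube of each pencil, while each pencil has only $\lesssim \delta^{-(s/2+\epsilon_0)}$ tubes. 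The Frostman property of $\mu$ propagates to a non-concentration condition on the tubes in each pencil (no $\rho$-ball of directions carries more than $\lesssim (\rho/\delta)^{?}$ of the tubes), so the hypotheses of the pencil-intersection theorem of this paper are met.

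Now I apply the $\epsilon_0$-increment on pencil intersections (equivalently, Bourgain's discretized sum-product estimate): under the non-concentration conditions, two pencils of $N$ $\delta$-tubes with $N \approx \delta^{-\sigma}$, $0<\sigma<1$, cannot have an intersection of measure much larger than $\delta^{2-2\sigma}$ — more precisely, the intersection has $\lesssim \delta^{-2\sigma - \epsilon_1}$ $\delta$-squares is \emph{false}; rather the correct bound, the content of that theorem, forces the intersection to be at most $\delta^{-(2\sigma+c(\sigma))}$-bad, i.e.\ there is a genuine gain. Choosing $\sigma = s/2 + \epsilon_0$ and taking $\epsilon_0$ small compared to the gain $c(s/2)$ produced by that theorem yields that $E_\delta$ has $\lesssim \delta^{-(s - \eta)}$ squares for some $\eta>0$ depending only on $s$, contradicting $|E_\delta|\approx\delta^{-s}$. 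Letting $\delta\to 0$ along the chosen sequence completes the proof, with the final $\epsilon_0(s)$ extracted from $c(s/2)$.

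The main obstacle, I expect, is not the sum-product input (which is a black box) but the \emph{reduction step}: namely, producing two good pins $y_1,y_2$ and transferring the Frostman regularity of $\mu$ into clean, quantitative non-concentration conditions on \emph{both} pencils simultaneously, uniformly over a good sequence of scales $\delta$. One must handle the possibility that $E$ concentrates near a line at some scales (dealt with by the $\dH(E\setminus l)=\dH E$ hypothesis, but this needs a multiscale/pigeonholing argument) and that most pins $y$ see only a low-dimensional projection (excised via \eqref{Orponen-half}, which guarantees the exceptional pin set has dimension $<s/2<s$, hence is negligible). Making the two non-concentration conditions compatible with the exact hypotheses of the pencil-intersection theorem — in particular getting the dependence of the exceptional-set bounds on $\delta$ to be merely polylogarithmic — is the delicate bookkeeping that occupies the bulk of the argument.
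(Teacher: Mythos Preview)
Your reduction to two pencils cannot give the $\epsilon_0$-gain. The trivial upper bound for the intersection of two $(\delta,\sigma)$-pencils (tips separated from $[0,1]^2$) is exactly $\delta^{2-2\sigma}$, and it is \emph{attained}: take $A\subset[0,1]$ any $(\delta,\sigma)$-set and let $E_\delta=A\times A$; this sits in two pencils of $\approx\delta^{-\sigma}$ tubes (horizontal and vertical, tips at infinity) and has measure $\approx\delta^{2-2\sigma}$. So plugging $\sigma=\tfrac{s}{2}+\epsilon_0$ into a two-pencil bound only yields $|E_\delta|\lesssim\delta^{2-s-2\epsilon_0}$, which is \emph{weaker} than what you assumed and gives no contradiction. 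This is exactly why the paper's pencil theorem (Theorem~\ref{main-pencil}) is stated for \emph{four} pencils with tips in general position; the discussion around Figure~\ref{eg-pencil-line} explains that even three pencils with non-collinear tips admit no improvement over the trivial bound. Two pencils recover only Orponen's $\tfrac{s}{2}$ estimate \eqref{Orponen-half}, as in Section~\ref{app-tube-condition}.

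The paper's actual mechanism is: after fixing Frostman measures $\mu,\nu$ on disjoint pieces $E,F$ and passing to the refined set $G$ from Proposition~\ref{prop-tube-condition} (the tube condition), one Cauchy--Schwarzes repeatedly to land on the six-fold integral
\[
\mu^2\times\nu^4\{(x_1,x_2,y_1,y_2,y_3,y_4):(x_i,y_j)\in G,\ S(x_i,y_j)\in X\},
\]
so that each $x_i$ lies in \emph{four} $(\delta,\sigma)$-pencils with tips $y_1,\dots,y_4$. When three of the $y_j$ are not in a $\delta^\rho$-tube, Theorem~\ref{main-pencil} applies and gives the gain \eqref{balance-1}. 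The complementary case, where $y_3,y_4$ fall into $T(l_{y_1,y_2},\delta^\rho)$, is handled by a separate two-pencil argument \emph{restricted to that thin tube} (yielding \eqref{balance-2}); here the tube condition from Proposition~\ref{prop-tube-condition} is essential to force transversality and to bound $\nu$-mass on such tubes. Balancing the parameter $\rho$ between the two cases produces the final $\epsilon_0(s)$. Your sketch is missing both the passage to four pins and the treatment of the near-collinear-tips case.
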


We also obtain results on the pinned version. In particular it makes progress towards Conjecture \ref{Liu-conj}.
\begin{thm}
	\label{main-thm-pin-radial-proj}
	Given $0<s<2$, there exists $\epsilon_0=\epsilon_0(s)>0$ such that the following holds.

	Suppose $E, F\subset\R^2$ are Borel sets, $0<\dH E, \dH F<2$, and $E$ is not  contained in a line. Then at least one of the following happens:
\begin{enumerate}[(i)]
\item there exists $y\in F$ such that
	$$\dH \pi^y(E)\geq \frac{\dH E}{2}+\epsilon_0(\dH E).$$
\item there exists $x\in E$ such that
	$$\overline{\dim}_{\mathcal{M}}\, \pi^x(F)= \dH F.$$
\end{enumerate}

In particular,
$$\dH\left\{y\in\R^2: \dH \pi^y(E)<\frac{\dH E}{2}+\epsilon_0(\dH E)\right\}\leq 1.$$
\end{thm}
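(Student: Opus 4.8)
\emph{Strategy of the proof.}
The plan is to argue by contradiction: assume that neither (i) nor (ii) holds, so that $\dH\pi^y(E)<\tfrac{\dH E}{2}+\epsilon_0$ for \emph{every} $y\in F$ while $\overline{\dim}_{\mathcal{M}}\,\pi^x(F)<\dH F$ for \emph{every} $x\in E$, and derive a contradiction once $\epsilon_0=\epsilon_0(\dH E)$ is chosen small enough in terms of the gain $\epsilon_0'=\epsilon_0'(\dH E)>0$ in the $\delta$-discretized pencil-intersection estimate (the main combinatorial result of this paper, which is equivalent to the discretized sum-product problem, so that the gain comes from Bourgain's theorem). This is also where the hypothesis $\dH E<2$ enters: the gain is positive precisely because $\tfrac12\dH E$ lies strictly between $0$ and $1$. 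Write $s=\dH E$. By Frostman's lemma fix a compactly supported probability measure $\mu$ on $E$ with $\mu(B(x,r))\lesssim r^{s-\eta}$; since $E$ is not contained in a line we may arrange, in addition, a non-concentration estimate $\mu(T)\lesssim_\eta|T|^{\kappa}$ for all tubes $T$, with $\kappa=\kappa(E,\eta)>0$ --- this is the refined ``tube condition'' mentioned in the abstract, one of the technical inputs. Fix likewise a Frostman measure $\nu$ on $F$ of exponent $\dH F-\eta$. Here $\eta>0$ is small and is sent to $0$ at the end.

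\smallskip
Next reduce the failure of (ii) to a uniform statement. Writing $E$ as a countable union, over rationals $\sigma<\dH F$ and integers $m$, of the sets $\{x\in E:\ N_\delta(\pi^x(F))\le\delta^{-\sigma}\ \text{for all}\ \delta<1/m\}$, we obtain a fixed $\sigma<\dH F$ and a subset $E_1\subseteq E$ with $\mu(E_1)>0$ on which this covering bound holds uniformly; replacing $\mu$ by $\mu|_{E_1}$ (renormalized) preserves the Frostman and tube bounds. Then dispose of the degenerate situation in which $F$ is (essentially) contained in a line $\ell$: the tube bound forces $\mu$ to give negligible mass to a neighbourhood of $\ell$, so there is $x_0\in E_1$ off $\ell$; since $\pi^{x_0}$ is bi-Lipschitz on $\ell$, the uniform bound would give $\overline{\dim}_{\mathcal M}F\le\sigma<\dH F\le\overline{\dim}_{\mathcal M}F$, which is absurd. (If instead $F$ is box-regular in a line, alternative (ii) holds outright; and a set $F$ concentrated near a line but box-irregular leads to a one-parameter ``restricted projection'' problem for $E$ viewed from a fixed line, again governed by the sum-product increment.) So henceforth $F$ and $\nu$ are quantitatively non-concentrated near every line at the relevant scales.

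\smallskip
In this main case I would contradict the failure of (i) by the energy bound
\[\int_{F}\mathcal{I}_{\beta}\big(\pi^y_{*}\mu\big)\,d\nu(y)<\infty\qquad\text{for every}\ \beta<\tfrac{s}{2}+\epsilon_0,\]
where $\mathcal{I}_\beta$ denotes $\beta$-energy: finiteness of the integrand for $\nu$-a.e.\ $y$, hence for some $y\in F$, gives $\dH\pi^y(E)\ge\beta$ for all such $\beta$, contradicting the failure of (i). Up to constants this integral equals $\iint_{E\times E}\big(\int_F\angle_y(x,x')^{-\beta}\,d\nu(y)\big)\,d\mu(x)\,d\mu(x')$, with $\angle_y(x,x')$ the angle subtended at $y$; a dyadic decomposition in this angle reduces it, scale by scale $\delta$, to bounding the number of ``$\delta$-collinear'' triples $(x,x',y)\in E\times E\times F$. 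For such a triple, $x,x'$ lie in a common $\delta$-tube through $y$ and, simultaneously, $y,x'$ lie in a common $\delta$-tube through $x$; the latter is exactly what the bound $N_\delta(\pi^x(F))\le\delta^{-\sigma}$ (from the failure of (ii)) controls, and the former is where the $\delta$-discretized pencil-intersection estimate enters, through the tube non-concentration of $\mu$ and of $F$. This beats the trivial count --- which only reproduces the threshold $\tfrac s2$ of \eqref{Orponen-half} --- by a factor $\delta^{\epsilon_0'}$, and summing the resulting geometric series in $\delta$ makes the displayed integral converge once $\epsilon_0$ is a small enough fraction of $\epsilon_0'$; this fixes $\epsilon_0=\epsilon_0(s)$. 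I expect the main obstacle to be the $\delta$-discretized pencil-intersection estimate itself --- equivalently Bourgain's discretized sum-product theorem, the conceptual core --- together with the bookkeeping that upgrades it, applied at single scales, to the Hausdorff-dimension conclusion; this is exactly why alternative (ii) is stated with \emph{upper box} dimension, so that its covering bound is available at \emph{all} small scales, hence summable.

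\smallskip
Finally, the displayed consequence follows by applying the dichotomy to an auxiliary set. If $Z:=\{y\in\R^2:\dH\pi^y(E)<\tfrac s2+\epsilon_0\}$ had $\dH Z>1$, pick a compact $F'\subseteq Z$ with $1<\dH F'<2$ (which exists, by standard considerations, when $\dH Z>1$) and apply the theorem to $(E,F')$: alternative (i) produces $y\in F'\subseteq Z$ with $\dH\pi^y(E)\ge\tfrac s2+\epsilon_0$, contradicting $y\in Z$, while alternative (ii) produces $x\in E$ with $\overline{\dim}_{\mathcal M}\,\pi^x(F')=\dH F'>1$, which is impossible since $\pi^x(F')\subseteq S^1$. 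Hence $\dH Z\le1$.
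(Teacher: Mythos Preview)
Your outline correctly identifies the ingredients (Frostman measures, the tube condition, the discretized pencil gain), and your derivation of the ``in particular'' clause---applying the dichotomy to a compact $F'\subset Z$ with $1<\dH F'<2$ and ruling out alternative~(ii) because $\pi^x(F')\subset S^1$ has dimension at most $1$---is clean and works. However, the core step has a genuine gap.

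Your energy reduction leads to counting $\delta$-collinear triples $(x,x',y)$, but a three-point count alone can only reproduce the threshold $s/2$; this is exactly Orponen's argument for \eqref{Orponen-half}, and Section~\ref{app-tube-condition} shows it needs nothing beyond the tube condition. The pencil-intersection estimate (Theorem~\ref{main-pencil}) requires \emph{four} $(\delta,\sigma)$-pencils with non-collinear tips; with a single pin $y$ there is one pencil, and no mechanism for the sum-product gain to enter. The paper instead applies Cauchy--Schwarz twice to reach a six-point configuration $(x_1,x_2,y_1,y_2,y_3,y_4)$: each $x_i$ then lies in the intersection of four $(\delta,\sigma)$-pencils with tips $y_1,\dots,y_4$, and Theorem~\ref{main-pencil} yields the $\epsilon_0$-gain---\emph{provided} the four tips are not all near a common line.

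That proviso is precisely where the dichotomy with (ii) enters, and it is different from your use of the failure of (ii). You extract a covering bound $N_\delta(\pi^x(F))\le\delta^{-\sigma}$ with $\sigma<\dH F$, but since $\dH F$ may be close to $2$ this is far too weak to feed into a triple count. In the paper's argument the only obstruction to Theorem~\ref{main-pencil} is the degenerate event that $y_1,\dots,y_4$ lie in a single tube of width $\delta^\rho$; if such ``heavy'' tubes in $F$ (i.e.\ $\nu$-mass not $\lesssim$ a power of the width) occur along an infinite sequence of scales, the Frostman condition on $\nu$ forces $F$ to be spread out along their axes, and radial projection from any $x\in E$ off those lines is bi-Lipschitz there, giving $\overline{\dim}_{\mathcal M}\,\pi^x(F)=\dH F$, i.e.\ alternative~(ii). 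If no such heavy tubes persist, the collinear-tips case is negligible and the four-pencil bound delivers~(i). Your sketch should be reorganized around this $2\times 4$ configuration rather than triples.
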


We would like to emphasis that the $\epsilon_0$ here only depends on $\dH E$, not on $\dH F$. This does not contradict examples from orthogonal projections on \eqref{Bourgain-half-epsilon} under projective transformations, in which case $F$ must lie in a line thus $(ii)$ always holds. This also shows the dichotomy in Theorem \ref{main-thm-pin-radial-proj} is necessary.

Theorem \ref{main-thm-pin-radial-proj} suggests that, the behavior of radial projections is expected to be much better if $F$ is not in a line. We remind the reader that the sharpness of \eqref{sharp-radial-projection} follows from examples with exceptional sets in a hyperplane \cite{Orp18}. Also we do not know any other counterexample on Conjecture \ref{Liu-conj} except affine subspaces. With these in mind we would like to make a wild guess. It is stronger than both Conjecture \ref{Liu-conj} and Conjecture \ref{Orponen-conj} above.
\begin{conj}
	Suppose $E, F$ are Borel sets in the plane, $\dH E, \dH F>0$, $F$ not in a line. Then there exists $y\in F$ such that
	$$\dH \pi^y(E)=\min\{\dH E, 1\}.$$
\end{conj}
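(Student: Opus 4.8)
The natural attack on this conjecture is to bootstrap the $\epsilon_0$-increment of \thmref{main-thm-pin-radial-proj} up to the sharp exponent. First note that, away from the pin, $\pi^y$ is locally Lipschitz, so a dyadic annulus decomposition yields $\dH\pi^y(E)\le\min\{\dH E,1\}$ for every $y$; hence ``$=$'' in the conjecture means ``$\ge$''. Write $\sigma:=\min\{\dH E,1\}$ and suppose, toward a contradiction, that $\dH\pi^y(E)<\sigma$ for every $y\in F$. After pigeonholing a common upper bound and passing to full-dimensional Frostman subsets $E'\subseteq E$ and $F'\subseteq F$ --- the former still not contained in a line --- we may assume $\dH\pi^y(E')\le\tau$ for all $y\in F'$, with $\tau<\sigma$ fixed. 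The goal is to prove this hypothesis self-improving: it should force some $y\in F'$ with $\dH\pi^y(E')\ge\tau+\epsilon_1$ for a definite $\epsilon_1=\epsilon_1(\tau,\dH E)>0$, which is absurd; finitely many iterations, followed by letting the pigeonholing loss tend to $0$, then yield a pin realizing $\sigma$.

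The self-improvement step calls for a \emph{relative} form of the pencil-of-tubes machinery behind \thmref{main-thm-pin-radial-proj}, run from the hypothesis ``$\dH\pi^y(E')\le\tau$ for all $y\in F'$'' rather than from the trivial bound $\dH E/2$. Discretized at scale $\delta=2^{-n}$, this hypothesis says that through each $y\in F'$ passes a pencil of $\lesssim\delta^{-\tau}$ tubes covering most of the $\delta$-neighbourhood of $E'$. Since $F'$ is not in a line, one may choose pins $y_1,y_2\in F'$ whose connecting segment crosses the bulk of $E'$ transversally at a definite angle, so the two pencils meet essentially in a projectively distorted product set; the Frostman property of a measure on $E'$ excludes the sum-product extremizers, and Bourgain's discretized sum-product theorem --- or an ABC-type refinement of it --- converts the deficit $\tau<\sigma$ into a genuine gain $\epsilon_1>0$. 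The dichotomy of \thmref{main-thm-pin-radial-proj} resurfaces: if one lands in alternative (ii), so that $\overline{\dim}_{\mathcal{M}}\pi^x(F')=\dH F$ for some $x\in E'$, then --- using again that $F'$ is not in a line --- one feeds this full-dimensional radial image of $F'$ back in as extra tube structure for $E'$, or re-runs the argument with $E'$ and $F'$ partially interchanged. Making the two alternatives genuinely reinforce one another, rather than merely alternate, is the first delicate point.

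The main obstacle, and the reason the conjecture remains open, is the \emph{degeneration of the increment at the endpoint}: Bourgain's $\epsilon_0$ is inexplicit and only known to be positive, and as the configuration approaches the sum-product extremizers --- equivalently, as $\tau\uparrow\sigma$ --- one has no lower bound on $\epsilon_1(\tau,\cdot)$ that is uniform on compact subintervals of $[\dH E/2,\sigma)$, so the recursion $\tau_{k+1}=\tau_k+\epsilon_1(\tau_k)$ may stall strictly below $\sigma$. I see two ways past this. One is to establish a quantitatively uniform discretized sum-product estimate whose gain is controlled only through the non-concentration exponents, which would let the iteration run to completion. The other, which I would favour, is to bypass iteration entirely: reduce the conjecture to the statement that a $(\dH F)$-dimensional family of pencils, each carrying a $\tau$-dimensional set of $\delta$-tubes through a point of $F$, cannot cover a $(\dH E)$-dimensional subset of the plane once $\tau<\sigma$, and attack this incidence statement directly via a high/low-frequency decomposition together with a Kaufman/Wolff-strength bound for the $(s,t)$-Furstenberg sets spanned by the lines through $E$ in the directions of $S(E)$. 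On either route the genuinely hard regime is the balanced one, $\dH E\approx\dH F\approx\tau$ near $1$, where neither the trivial projection bound nor a crude incidence count gives anything and the full force of the sum-product/Furstenberg phenomenon is required.
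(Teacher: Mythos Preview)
There is nothing to compare here: in the paper this statement is not a theorem but an open \emph{conjecture} --- the authors explicitly call it a ``wild guess'' and write ``any progress or counterexample would be appreciated.'' No proof is offered, and none is claimed.

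Your proposal is likewise not a proof, and to your credit you say so (``the reason the conjecture remains open''). What you have written is a coherent research outline, and your diagnosis of the central obstruction is accurate: iterating the $\epsilon_0$-increment of \thmref{main-thm-pin-radial-proj} fails because Bourgain's $\epsilon_0(\sigma)$ carries no uniform positive lower bound as $\sigma$ approaches the target, so the recursion $\tau_{k+1}=\tau_k+\epsilon_1(\tau_k)$ can accumulate strictly below $\min\{\dH E,1\}$. Two further gaps deserve to be named explicitly. First, the ``relative'' pencil estimate you invoke --- an increment starting from an arbitrary $\tau$ rather than from $\dH E/2$ --- is not in the paper and is itself a substantial open step; \thmref{main-pencil} and its proof are tied to the sum-product structure at the balanced exponent and do not obviously generalize. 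Second, your handling of alternative~(ii) in the dichotomy is only gestured at: ``feeding the full-dimensional radial image of $F'$ back in'' or ``partially interchanging $E'$ and $F'$'' would need a concrete mechanism, since $\overline{\dim}_{\mathcal M}\pi^x(F')=\dH F$ gives information about $F$ seen from $E$, not about $E$ seen from $F$, and there is no symmetry to exploit directly.

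Your alternative route via $(s,t)$-Furstenberg sets is a reasonable direction --- and indeed, progress on this conjecture after the paper was written has come through exactly that circle of ideas --- but as stated it is a hope, not an argument.
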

One may also expect $\min\{\dH E, d-1\}$ in $\R^d$, $d\geq 2$, given $F$ not in any proper affine subspace. Any progress or counterexample would be appreciated. For example we would be very happy to see if Theorem \ref{main-thm-pin-radial-proj} can be improved to
$$\dH \pi^y(E)\geq \frac{\dH E}{2}+\epsilon_0(\dH E),\ y\in F,$$
given $0<\dH E<2$ and $\dH F>0$ not in a line.

\subsection{Pencils of tubes} To prove our theorems on radial projections, the key new ingredient is a non-trivial estimate on intersection between pencils of $\delta$-tubes. These days intersection between $\delta$-tubes has attracted a lot of interests, especially from harmonic analysts, so we hope that this structure would shed lights on other problems in the future.

We start from intersection between pencils of lines in incidence geometry. An $n$-pencil in the plane, with tip $p\in\R^2$, is a set of $n$ concurrent lines passing through $p$. Given $m$ $n$-pencils, an $m$-rich point is a point passed through by one line from each pencil. The following question was raised by Rudnev:
\begin{center}
	\textit{Given $m$ $n$-pencils in the plane, how large the set of m-rich points can be?}
\end{center}

By projective transformations, one can also take parallel lines into account, that can be seen as pencils of lines with tips in the infinity line.

We may assume $m\ll n$ and no line is shared by every pencil. Then the trivial upper bound is $C_m n^2$, which can be attained when tips lie in the same line, and also $m=3$ with non-colinear tips. See Figure \ref{eg-pencil-line} below for projective images of these examples.
\begin{figure}[H]
\centering
\begin{subfigure}[b]{.5\textwidth}
\centering
\begin{tikzpicture}[scale=1, p2/.style={line width=0.275, black}, p3/.style={line width=0.15, black!50!white}]
\foreach \x in {1,...,4}{
\draw (\x, -0.25) node[anchor=north]{\x};
\draw (-0.25, \x) node[anchor=east]{\x};}
\foreach \x in {0,...,4}{
\draw (\x,-0.25) -- (\x, 4.5);
\foreach \y in {0,...,4}{
\draw (-0.25, \y) -- (4.5, \y);
	\fill (\x,\y) circle (0.1); }}
\foreach \m in {1,...,5}
\draw (\m+0.2, -0.2) -- (-0.2, \m+0.2);
\foreach \m in {0,...,2}
\draw (\m-0.4, -0.2) -- (5, 2.5-\m/2);
\foreach \m in {-4,...,0}
\draw (-0.4, -0.2-\m/2) -- (5, 2.5-\m/2);
\end{tikzpicture}
\caption*{$m\ll n$ with colinear tips (parallel lines)}
\end{subfigure}%
\begin{subfigure}[b]{.5\textwidth}
\centering
\begin{tikzpicture}[scale=0.26, p2/.style={line width=0.275, black}, p3/.style={line width=0.15, black!50!white}]
\draw (0,0) node[anchor=north east]{0};
\foreach \x in {1,...,4}{
\draw (2^\x,0) -- (2^\x, 2^4+2);
\draw (2^\x,0) node[anchor=north]{$2^\x$};
\foreach \y in {1,...,4}{
\draw (0, 2^\y) -- (2^4+2, 2^\y);
\draw (0, 2^\y) node[anchor=east]{$2^\y$};
	\fill (2^\x,2^\y) circle (0.4); 
	\draw (0,0) -- (1.1*2^\x, 1.1*2^\y); }}
\end{tikzpicture}
\caption*{$m=3$ with non-colinear tips}
\end{subfigure}
\caption{Projective images of counterexamples on pencils of lines}\label{eg-pencil-line}
\end{figure}
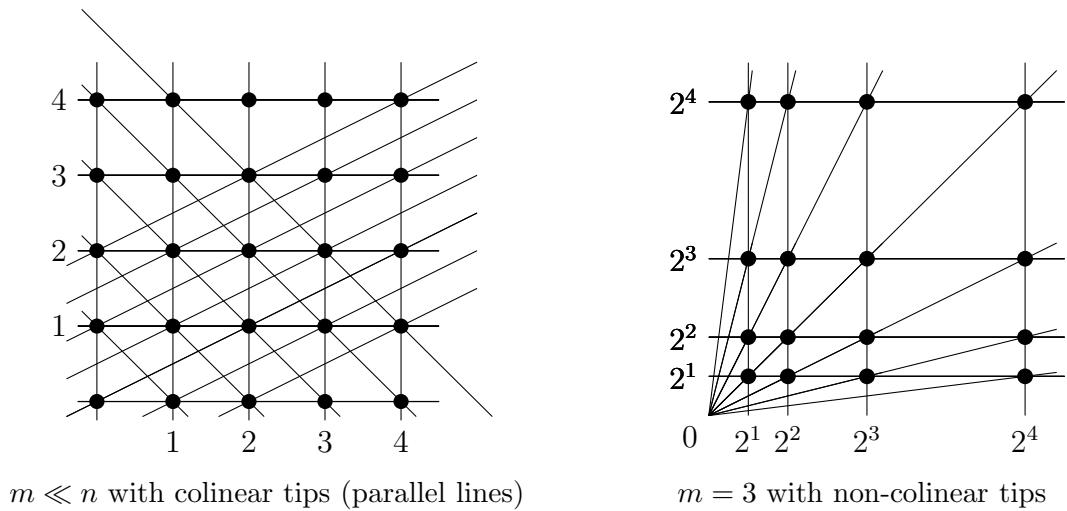

Now the simplest remaining case is $m=4$ with tips non-colinear, and indeed non-trivial exponents follow. A result of Chang and Solymosi \cite{CS07} implies that the number of $4$-rich points is $O(n^{2-1/24})$, which was improved to $O(n^{2-1/6})$ by Roche-Newton and Warren \cite{RW18}. No better exponent is known yet for $m>4$. On the other hand, one cannot beat $C n^{3/2}/m^3$ for any $m\geq 4$ \cite{ARS18} \cite{RW18}, even under the assumption that no three tips lie in the same line.

In this paper we consider the $\delta$-neighborhood of pencils of lines, namely pencils of $\delta$-tubes. We denote these pencils of $\delta$-tubes by $P_i$ and ask how large 
$$\left|\bigcap_{1\leq i\leq m} P_i\cap[0,1]^2\right|$$
could be. Examples as above show that the trivial upper bound is $C \delta^2n^2$ and it is interesting only if $m\geq 4$ with tips non-colinear. However, these are still not enough for non-trivial exponents, because $P_i$ could be a sector of angle $n\delta$. Therefore we need a non-concentration condition on directions of these tubes. This dates back to Katz and Tao \cite{KT01}.

\begin{definition}
	\label{def-KT}
	We say $A\subset\R^d$ is a $(\delta, \sigma)_d$-set if
$$|A \cap B(a,r)|\lessapprox \delta^d\left(r/\delta\right)^\sigma$$
for any $a\in\R^d$ and $\delta<r<1$. For convenience we write $(\delta, \sigma)$ when $d=1$.
\end{definition}

We may identify $S^1$ with $[0,1)$. We say a pencil of tubes $P$ with tip $p\in\R^2$ is a $(\delta, \sigma)$-pencil, if the direction set $\pi^{p}(P)$ is a $(\delta, \sigma)$-set. Our result is the following.
\begin{thm}
	\label{main-pencil}
	For any $0<\sigma<1$, there exist $\epsilon_0=\epsilon_0(\sigma)>0$ such that the following holds.

	Suppose $P_i$, $i=1,2,3,4$, are $(\delta, \sigma)$-pencils with tips $p_i$, $i=1,2,3,4$, satisfying
	\begin{itemize}		
		\item $\dist(p_i, p_j)\approx\dist(p_i, [0,1]^2)\approx\dist(l_{p_i, p_j}, [0,1]^2)\approx 1$, $i, j=1,2,3,4$, $i\neq j$;
		\item tips $p_i$, $i=1,2,3,4$, are not contained in a tube of radius $\approx1$.
	\end{itemize}		
	Then
	$$\left|\bigcap_{1\leq i\leq 4} P_i\cap [0,1]^2 \right| \lesssim \delta^{2-2\sigma+\epsilon_0}. $$ 
\end{thm}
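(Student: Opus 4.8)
The plan is to show that an essentially sharp four--fold intersection would manufacture a $(\delta,\sigma)$--set with simultaneously small sum set and small product set, contradicting Bourgain's discretized sum--product theorem \cite{Bou10}; the $\epsilon_0(\sigma)$ in the statement will then be a fixed fraction of the gain in Bourgain's theorem. So I would argue by contradiction, assuming $|\bigcap_{i=1}^4 P_i\cap[0,1]^2|\geq\delta^{2-2\sigma+\epsilon}$ for a small $\epsilon>0$ to be fixed at the end, and begin with a \emph{normalization}: apply a projective transformation $\Psi$ of the plane sending $p_1$ to the horizontal point at infinity, $p_2$ to the vertical point at infinity, and $p_3$ to the origin. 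The hypotheses $\dist(p_i,p_j)\approx\dist(p_i,[0,1]^2)\approx\dist(l_{p_i,p_j},[0,1]^2)\approx1$ together with ``the $p_i$ are not in a unit tube'' are exactly what is needed to guarantee that $\Psi,\Psi^{-1}$ are bi--Lipschitz with constants $\approx1$ on a neighbourhood of $Q:=\Psi([0,1]^2)$, that $Q$ has diameter and measure $\approx1$, and that $Q$ sits at distance $\approx1$ from all four points $\Psi(p_i)$, so that $p:=\Psi(p_4)=(q_1,q_2)$ is a generic finite point. Since $\Psi$ maps lines to lines, the tubes of $P_1,P_2,P_3,P_4$ become, up to changing $\delta$ by a constant, horizontal $\delta$--tubes with heights in a set $B$, vertical $\delta$--tubes with $x$--coordinates in a set $A$, $\delta$--neighbourhoods of lines $y=sx$ with $s\in S$, and $\delta$--neighbourhoods of lines through $p$ with slope $t\in T$, respectively. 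Each of $A,B,S,T$ is the image of the relevant direction set $\pi^{p_i}(P_i)$ under a M\"obius map that is bi--Lipschitz on the range in play, hence each is a $(\delta,\sigma)$--set. Thus, writing $X_\delta$ for the $\delta$--neighbourhood of $X$,
\[
\widehat G:=\bigl\{(x,y)\in Q:\ x\in A_\delta,\ y\in B_\delta,\ y/x\in S_\delta,\ (y-q_2)/(x-q_1)\in T_\delta\bigr\}
\]
has $|\widehat G|\gtrsim\delta^{2-2\sigma+\epsilon}$.

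\emph{From the intersection to sum--product structure.} Since $\widehat G\subset A_\delta\times B_\delta$ and $|A_\delta|,|B_\delta|\lesssim\delta^{1-\sigma}$, the mass bound forces $|A_\delta|,|B_\delta|\gtrsim\delta^{1-\sigma-\epsilon}$, and likewise for the parts of the slope sets actually attained; a standard dyadic pigeonholing (costing only $\delta^{o(1)}$) lets me pass to subsets of $A,B,S,T$ that are Katz--Tao $(\delta,\sigma)$--sets of full mass $\approx\delta^{1-\sigma}$ with balanced fibrations — precisely the class to which Bourgain's theorem applies. Now discretize at scale $\delta$: a $\delta$--square of $\widehat G$ is determined, up to bounded multiplicity, by the pair $(a,s)\in A\times S$ it lies near (because then $x\approx a$, $y\approx sa$), so
\[
\#\bigl\{(a,s)\in A\times S:\ s\,a\in B_\delta\ \text{ and }\ (sa-q_2)/(a-q_1)\in T_\delta\bigr\}\gtrsim\delta^{-2\sigma+\epsilon}.
\]
The first constraint alone forces the multiplicative energy of $A$ and $S$ to be $\gtrsim\delta^{2\epsilon}$ times its maximal value $\delta^{-3\sigma}$, so by a Balog--Szemer\'edi--Gowers type argument $A$ and $S$ acquire a large common approximate multiplicative structure detected inside $B$. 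The second constraint, because $p$ is a generic \emph{finite} point — this is where the non--degeneracy of the tip configuration enters: it makes $a\mapsto(sa-q_2)/(a-q_1)$ behave like a genuine additive translation rather than a dilation — forces in the same way a near--maximal \emph{additive}--type energy. Combining the two yields a $(\delta,\sigma)$--set, $\sigma\in(0,1)$, of full mass whose sum set and product set are both of size $\lesssim\delta^{-\sigma-O(\epsilon)}$, which contradicts Bourgain's discretized sum--product theorem as soon as $\epsilon$ is taken smaller than a fixed multiple of Bourgain's gain; that value of $\epsilon$ is the $\epsilon_0(\sigma)$ of the theorem, and it depends on $\sigma$ only.

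\emph{Where the difficulty lies.} The delicate step is the last one: turning ``the four--fold intersection is within $\delta^\epsilon$ of the trivial maximum'' into exactly the combinatorial input that Bourgain's theorem consumes, and certifying — via the hypotheses that no three tips are collinear and the four tips do not lie in a unit tube — that the constraints contributed by $P_3$ and $P_4$ carry \emph{independent} structure, one multiplicative and one additive. If three tips were collinear, or all four lay in a unit tube, these constraints would be compatible and the bound $\delta^{2-2\sigma}$ would in fact be attained (this is precisely the content of the examples in Figure~\ref{eg-pencil-line}), so some such non--degeneracy hypothesis is unavoidable. A secondary technical point is the normalization: projective maps are not affine, and one must check that on the region dictated by the distance hypotheses the induced maps $\mathbb P^1\to\mathbb P^1$ on pencils are uniformly bi--Lipschitz, so that the $(\delta,\sigma)$--property of the direction sets survives with $\approx1$ constants.
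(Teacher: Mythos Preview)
Your high-level strategy---projective normalization sending two tips to infinity and one to the origin, then Balog--Szemer\'edi--Gowers, then a contradiction with Bourgain's discretized sum--product---is exactly the paper's route through the implication $(i)\Rightarrow(ii)$ of Theorem~\ref{main-theorem-equivalence}. The setup you describe, and the observation that the $P_3$-constraint $y/x\in S$ produces near-maximal multiplicative energy to which BSG applies, match the paper closely.

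The gap is in the step you yourself flag as delicate. You assert that the $P_4$-constraint forces ``near-maximal \emph{additive}-type energy'' because $a\mapsto(sa-q_2)/(a-q_1)$ ``behaves like a genuine additive translation.'' This heuristic is wrong: that map is a M\"obius transformation, and the constraint $(b-q_2)/(a-q_1)\in T$ is structurally identical to $b/a\in S$---it is multiplicative in the \emph{shifted} coordinates $(a-q_1,b-q_2)$, not additive in any coordinates. After two rounds of BSG you therefore have two pieces of multiplicative structure in two different affine charts, and the whole problem is to manufacture a single set with small \emph{sumset} from their interaction. Your proposal asserts this outcome but supplies no mechanism for it.

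The paper's mechanism is a specific algebraic identity. In its normalization $p_4=(t_0,1)$, two rounds of BSG yield $|A/B|,\ |(t_0-A)/(1-B)|,\ |(1-B)/(1-B)|\approx\delta^{1-\sigma}$. Then one writes
\[
\frac{t_0-bx}{1-b'}\;=\;t_0v-(v-u)x,\qquad u:=\frac{1-b}{1-b'},\quad v:=\frac{1}{1-b'},
\]
which \emph{linearizes} the interaction: for $(b,b')\in B^2$ and $x=a/b\in A/B$ the left side lies in $(t_0-A)/(1-B)$, so pigeonholing a generic $x_0\neq t_0$ gives a dense set of $(u,v)\in\tfrac{1-B}{1-B}\times\tfrac{1}{1-B}$ whose \emph{linear} combination $(t_0-x_0)v+x_0u$ lands in a set of size $\approx\delta^{1-\sigma}$. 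A third BSG then produces $D\subset 1/(1-B)$ with $|D+D|\approx\delta^{1-\sigma}$, while $|DD|\le|(1-B)(1-B)|\approx\delta^{1-\sigma}$ is free. That change of variables---not any direct ``additive'' reading of the fourth pencil---is the missing idea.
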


Similar to the discrete case, by projective transformations one can also take parallel tubes into account.

\subsection{Discretized sum-product}
Sum-product is a fundamental phenomenon in mathematics. Roughly speaking it says a subset cannot be additive and multiplicative simultaneously, where the ambient space could be $\R$, $\Z$, finite fields, etc. There is a very large body of literature on this problem, but it is still far from being completely understood.

In this paper we consider the $\delta$-discretized version. It was raised by Katz and Tao \cite{KT01} and their conjecture was solved by Bourgain \cite{Bou03}.
\begin{thm}[Bourgain, 2003]\label{Bourgain-epsilon}
	Given $0<\sigma<1$, there exists $\epsilon_0>0$ such that, for any $(\delta, \sigma)$-set $A\subset [0,1]$, $|A|\approx\delta^{1-\sigma}$, we have
		$$\max \{|A+A|, |AA| \} \gtrsim \delta^{-\epsilon_0} |A|.$$
\end{thm}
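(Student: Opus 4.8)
The plan is to argue by contradiction and reduce the statement to a quantitative, $\delta$-discretized form of the fact (Erd\H{o}s--Volkmann, Edgar--Miller) that $\R$ contains no Borel sub-ring of Hausdorff dimension strictly between $0$ and $1$. First I would suppose the theorem fails for some $\sigma\in(0,1)$: then there are a sequence $\delta\to0$ and $(\delta,\sigma)$-sets $A=A_\delta\subset[0,1]$ with $|A|\approx\delta^{1-\sigma}$ and $\max\{|A+A|,|AA|\}\leq K|A|$, where $K=K(\delta)=\delta^{-\eps}$ and $\eps=\eps(\delta)\to0$ as slowly as one wishes. It is worth recording at the outset why an incidence argument will not close the gap: the Elekes estimate, obtained by applying Szemer\'edi--Trotter to the lines $y=s(x+t)$ ($s,t\in A$), which are rich in points of $(A-A)\times(AA)$, only gives $\max\{|A+A|,|AA|\}\gtrsim|A|^{5/4}$, and since $|A|=\delta^{1-\sigma}<1$ this reads $\gtrsim\delta^{(1-\sigma)/4}|A|$, which is \emph{weaker} than the trivial bound $\gtrsim|A|$. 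Polynomial sum--product estimates are vacuous in the discretized regime, so the exponential gain $\delta^{-\epsilon_0}$ has to come from a self-improving, multi-scale mechanism. Its first ingredient is scale propagation: for $\delta<\rho<1$, coarsening $A$ to a $\rho$-grid produces, up to constants, a $(\rho,\sigma)$-set of measure $\approx\rho^{1-\sigma}$ whose sum and product sets are the coarsenings of $A+A$ and $AA$, so the smallness hypothesis holds \emph{simultaneously at every scale} between $\delta$ and $1$.

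Next I would turn smallness into structure. Writing $N=|A|/\delta$ for the number of $\delta$-atoms of $A$, the bound $|A+A|\leq K|A|$ forces, by Cauchy--Schwarz, the additive energy $E_+(A)\gtrsim N^3/K$, and likewise $E_\times(A)\gtrsim N^3/K$. Applying the Balog--Szemer\'edi--Gowers theorem on the additive side extracts $A'\subseteq A$ with $|A'|\gtrsim K^{-O(1)}|A|$ and $|A'-A'|\lesssim K^{O(1)}|A'|$, and a discretized Freiman-type theorem then places $A'$, at the working scale, inside a generalized arithmetic progression of bounded rank and controlled size: $A'$ is essentially arithmetically structured. Pl\"unnecke--Ruzsa is invoked throughout to keep all iterated sum and product sets of size $\lesssim K^{O(1)}|A|$.

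Then I would extract the contradiction. Since $A'$ retains a $K^{-O(1)}$ proportion of $A$, it still carries comparable multiplicative energy, hence is multiplicatively rich; but a bounded-rank generalized arithmetic progression that is also multiplicatively rich must degenerate, with effective dimension $0$ or $1$ rather than the intermediate value $\sigma$. Running this dichotomy through the scales supplied by the first step and bootstrapping the loss, one arrives at a cover of $A$ by $\lesssim\delta^{-C\eps}$ intervals of length $\delta^{1-\sigma+\eta}$ for some fixed $\eta=\eta(\sigma)>0$; such a cover has total measure $\lesssim\delta^{1-\sigma+\eta-C\eps}$, which for $\eps<\eta/C$ contradicts $|A|\approx\delta^{1-\sigma}$. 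Therefore some $\epsilon_0=\epsilon_0(\sigma)>0$ must work, which is the theorem.

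The hard part will be the last two steps together with the $\eps$-bookkeeping running through them. One must combine the additive and the multiplicative structure of $A$ \emph{without} ever invoking a sum--product-type estimate, so as not to argue in a circle; and one must guarantee that the compounded losses --- from Balog--Szemer\'edi--Gowers, from Pl\"unnecke--Ruzsa, and from the Freiman-type step, each a fixed power of $K=\delta^{-\eps}$ --- together with the cross-scale iteration still leave a genuine positive net gain $\delta^{-\epsilon_0}$. Making all of this quantitative and uniform in $\delta$ is exactly where the difficulty of \cite{Bou03} lies.
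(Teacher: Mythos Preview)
The paper does not prove Theorem~\ref{Bourgain-epsilon} at all: it is stated as Bourgain's 2003 result and cited to \cite{Bou03} as a black box. The paper's own contribution (Theorem~\ref{main-theorem-equivalence}) is to show that (i), (ii), (iii) are \emph{equivalent}, and in particular the implication $(iii)\Rightarrow(i)$ in Section~5 is a short trick: if $|A+A|=K|A|$, pick a popular sum $z$ so that $A_z:=\tfrac{A}{z}\cap(1-\tfrac{A}{z})$ has $|A_z|\gtrsim|A|/K$, observe that both $A_zA_z$ and $(1-A_z)(1-A_z)$ lie in $AA/z^2$, and invoke (iii). This is a reduction, not an independent proof; the chain $(i)\Rightarrow(ii)\Rightarrow(iii)\Rightarrow(i)$ is circular without Bourgain's input.

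What you have written is therefore not a comparison target for anything in this paper --- you are sketching an outline of Bourgain's own argument in \cite{Bou03}. As an outline it hits several genuine landmarks (failure of polynomial/incidence methods in the discretized regime, scale propagation, Balog--Szemer\'edi--Gowers, Pl\"unnecke--Ruzsa bookkeeping), and you are honest that the last two steps are where the work is. But the sketch has a real gap at the decisive point: the line ``a bounded-rank generalized arithmetic progression that is also multiplicatively rich must degenerate, with effective dimension $0$ or $1$'' is precisely the discretized sum-product phenomenon you are trying to prove, and invoking it as a known dichotomy is circular. Bourgain's actual mechanism is not BSG + Freiman followed by a structural dichotomy; it is a delicate multi-scale induction producing, from small $|A+A|$ and $|AA|$, a quantitative violation of the $(\delta,\sigma)$ non-concentration condition on $A$ --- roughly, one shows that for some intermediate scale $\rho$ the set $A$ concentrates in far fewer than $(\rho/\delta)^\sigma$ many $\rho$-intervals. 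Your concluding cover by $\lesssim\delta^{-C\eps}$ intervals of length $\delta^{1-\sigma+\eta}$ is the right \emph{shape} of contradiction, but nothing in your argument before that point actually produces it. If you want to turn this into a proof you need to replace the Freiman/dichotomy step with the genuine scale-inductive engine from \cite{Bou03} (or the later streamlined versions in \cite{BG08}, \cite{GKZ18}).
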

An explicit expression of $\epsilon_0$ was recently given by Guth, Katz and Zahl \cite{GKZ18} (see also \cite{Che19}). The existence of $\epsilon_0$ still holds under milder conditions (see, e.g. \cite{BG08}, \cite{Bou10}), and all results in this paper still hold accordingly.

Discretized sum-product has proved connected to Borel rings in the real, distance sets, Furstenburg sets, orthogonal projections, spectral gaps, Besicovitch sets (see, for example, \cite{KT01} \cite{Bou03} \cite{BG08} \cite{Bou10} \cite{BG12} \cite{BD17} \cite{KZ19}), and many others. In this paper we add pencils of tubes and radial projections into this family.

\begin{thm}
	\label{main-theorem-equivalence}
	The following statements are equivalent.
	\begin{enumerate}[(i)]
	    \item Theorem \ref{Bourgain-epsilon};
		\item Theorem \ref{main-pencil};
		\item Given $0<\sigma<1$, there exists $\epsilon_0>0$ such that, for any $(\delta, \sigma)$-set $A\subset [1/4,1/2]$, $|A|\approx\delta^{1-\sigma}$, we have
		$$\max\{|AA|, |(1-A)(1-A)|\}\gtrsim \delta^{-\epsilon_0}|A|.$$
	\end{enumerate}
\end{thm}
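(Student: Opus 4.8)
The plan is to prove the cycle of implications $(i)\Rightarrow(iii)\Rightarrow(ii)\Rightarrow(i)$, since $(i)\Leftrightarrow(iii)$ is essentially a bilinear reformulation of sum-product, and $(ii)$ is the geometric incarnation sitting in between. First I would establish $(i)\Leftrightarrow(iii)$ by a standard change of variables: the map $x\mapsto 1/x$ is bi-Lipschitz with bounded distortion on $[1/4,1/2]$, and under it the condition ``$A$ is a $(\delta,\sigma)$-set'' is preserved up to harmless constants. The identity $\frac1a-\frac1b = \frac{b-a}{ab}$ lets one convert an additive expression $|A+A|$ (or $|A-A|$) into a multiplicative-difference expression, while $|AA|$ is transported to $|AA|$ up to constants; playing the two roles against each other, and using the translation/reflection invariance $A\mapsto 1-A$ to symmetrize, one exchanges the pair $\{|A+A|,|AA|\}$ for the pair $\{|AA|,|(1-A)(1-A)|\}$. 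This is the kind of elementary-but-fiddly rescaling argument that appears throughout the discretized sum-product literature (e.g. in \cite{Bou03}, \cite{BG08}), so I would cite that style and only record the affine maps involved.

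Next, for $(iii)\Rightarrow(ii)$, I would exploit the fact that four pencils with tips in ``general position'' can be sent by a projective transformation of $\R^2$ to a standard configuration — say three tips at $(0,0)$, $(1,0)$, $(\infty \text{ in one direction})$, and the fourth along the diagonal — so that the four pencils become the vertical lines, the horizontal lines, and two families whose slopes encode an additive and a multiplicative structure on $[0,1]$. The hypothesis $\dist(p_i,p_j)\approx\dist(p_i,[0,1]^2)\approx\dist(l_{p_i,p_j},[0,1]^2)\approx1$ and non-colinearity of the tips are exactly what guarantee the projective map and its inverse are bi-Lipschitz on the relevant region with constants depending only on these implicit constants, so $(\delta,\sigma)$-sets map to $(\delta,\sigma)$-sets and $\delta$-tubes to $\delta$-tubes (up to constants). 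A point lying in all four tubes then corresponds to a solution of an equation of the shape $a\cdot a' = c$, $(1-a)(1-a') = c'$ with $a,a'$ in the direction sets; counting $\delta$-separated such quadruples and comparing with the measure of the intersection, a bound $\max\{|AA|,|(1-A)(1-A)|\}\gtrsim\delta^{-\epsilon_0}|A|$ forces $|\bigcap P_i\cap[0,1]^2|\lesssim\delta^{2-2\sigma+\epsilon_0}$ after summing a geometric/dyadic series over scales. The main technical work here is the bookkeeping identifying ``$\delta$-tube through a given point with prescribed direction from each pencil'' with ``$\delta$-entry of a product set'', together with a pigeonholing to reduce to the case $|\pi^{p_i}(P_i)|\approx\delta^{1-\sigma}$.

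Finally, $(ii)\Rightarrow(i)$ runs the previous correspondence in reverse: given a $(\delta,\sigma)$-set $A$ with $|A|\approx\delta^{1-\sigma}$ and both $|A+A|$, $|AA|$ small, I would build four explicit pencils (two coordinate pencils plus one ``additive'' and one ``multiplicative'' pencil whose direction sets are affine copies of $A$, $A+A$, $AA$, scaled into $[1/4,1/2]$ or wherever the non-concentration hypotheses are met) so that the common intersection with $[0,1]^2$ is forced to be large — roughly $\delta^2\cdot|A|\cdot|A+A|\cdot|AA|/|A|^2$ or a similar product of tube-counts — contradicting Theorem~\ref{main-pencil} unless $\max\{|A+A|,|AA|\}\gtrsim\delta^{-\epsilon_0}|A|$. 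The hard part throughout will be verifying the non-concentration (the $(\delta,\sigma)$-pencil condition) for the \emph{constructed} pencils: a priori $A+A$ or $AA$ need not be a $(\delta,\sigma)$-set even when $A$ is, so one must either invoke the robust form of Bourgain's theorem (which, as the excerpt notes, holds under milder hypotheses and is the version one should use), or insert a refinement/regularization step extracting a large $(\delta,\sigma')$-subset with controlled sumset and productset before feeding it into the geometric estimate. I expect this regularization — matching the precise non-concentration normalization across the three equivalent statements — to be the principal obstacle, and the place where the careful choice of the interval $[1/4,1/2]$ in $(iii)$ and the distance hypotheses in $(ii)$ earns its keep.
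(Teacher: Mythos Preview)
Your cycle runs $(i)\Rightarrow(iii)\Rightarrow(ii)\Rightarrow(i)$, whereas the paper proves $(i)\Rightarrow(ii)\Rightarrow(iii)\Rightarrow(i)$. A different cycle would be fine, but two of your arrows have real problems.

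\textbf{The claim that $(i)\Leftrightarrow(iii)$ is ``a standard change of variables'' via $x\mapsto 1/x$ does not work.} That map sends $A$ to $B=1/A$ with $|AA|\approx|BB|$, but $1-A$ becomes $(B-1)/B$, so $|(1-A)(1-A)|\approx|(B-1)(B-1)|$: you land on the \emph{same} pair $\{|BB|,|(B-1)(B-1)|\}$, not on $\{|B+B|,|BB|\}$. The identity $\tfrac1a-\tfrac1b=\tfrac{b-a}{ab}$ relates $|B-B|$ to a ratio set of $A$, not to $|(1-A)(1-A)|$. The paper's $(iii)\Rightarrow(i)$ instead uses a genuinely different trick: if $|A+A|=K|A|$, pick $z\in A+A$ with $|A\cap(z-A)|\ge |A|/K$ (from $\chi_A*\chi_A$), set $A_z=\tfrac{A}{z}\cap(1-\tfrac{A}{z})$, and observe that \emph{both} $A_zA_z$ and $(1-A_z)(1-A_z)$ sit inside $AA/z^2$; then $(iii)$ forces $|AA|\gtrsim\delta^{-\epsilon_0}|A|/K$, and one balances against $K$. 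The forward direction $(i)\Rightarrow(iii)$ is not done directly at all --- it passes through $(ii)$.

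\textbf{Your $(ii)\Rightarrow(i)$ construction violates the hypothesis of $(ii)$.} Two coordinate pencils plus an ``additive'' pencil (parallel lines of slope $-1$) and a ``multiplicative'' pencil (lines through the origin) have tips $[1{:}0{:}0],[0{:}1{:}0],[1{:}{-1}{:}0],[0{:}0{:}1]$ in $\mathbb{P}^2$; the first three lie on the line at infinity, so the tips are \emph{not} in general position and Theorem~\ref{main-pencil} does not apply. This is exactly why $(iii)$ is phrased with $(1-A)(1-A)$ rather than $A+A$: the canonical four points in general position are $[1{:}0{:}0],[0{:}1{:}0],[0{:}0{:}1],[1{:}1{:}1]$, and the pencil at $(1,1)$ through $A\times A$ has direction set governed by $(1-A)/(1-A)$. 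The paper's $(ii)\Rightarrow(iii)$ builds the four pencils $A\times\R$, $\R\times A$, $\R\cdot(A\times A)$, $(1,1)+\R\cdot((1-A)\times(1-A))$, all containing $A\times A$, and handles the non-concentration of the last two via the Katz--Tao refinement (Lemma~\ref{Katz-Tao-Refinement}), not via a robust form of Bourgain.

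\textbf{Your $(iii)\Rightarrow(ii)$ sketch skips the real work.} After projective normalization one gets $P_1=A\times\R$, $P_2=\R\times B$ with $A\neq B$ in general, and the fourth tip at an arbitrary $(t_0,1)$. Reducing this asymmetric two-set configuration to the one-set statement $(iii)$ is not automatic. The paper's $(i)\Rightarrow(ii)$ applies Balog--Szemer\'edi--Gowers \emph{twice} (once for each of $p_3,p_4$) to extract subsets with $|A/B|,\,|(t_0-A)/(1-B)|,\,|(1-B)/(1-B)|\approx\delta^{1-\sigma}$, then performs a nontrivial algebraic substitution $(u,v)=\bigl(\tfrac{1-b}{1-b'},\tfrac{1}{1-b'}\bigr)$ to produce a single set $D\subset 1/(1-B)$ with $|D+D|\approx|DD|\approx|D|$, contradicting Bourgain. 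None of this is visible in your outline.
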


\subsection*{Organization.} This paper is organized as follows. In Section 2 we review some useful lemmas from additive combinatorics and geometric measure theory. In Section 3, 4, 5 we prove Theorem \ref{main-theorem-equivalence}, thus Theorem \ref{main-pencil} follows. In Section 6 we prove a tube condition on Frostman measures after careful refinement. In Section 7, 8 we prove Theorem \ref{main-thm-radial-proj}, \ref{main-thm-pin-radial-proj}. In Section 9 we reprove \eqref{Orponen-half} in several lines with our tube condition.

\subsection*{Notation}
\ 

$|E|$ denotes the Lebesgue measure of a Borel set $E$; $\#(A)$ denotes the cardinality of a finite set $A$.

$\dH$ denotes the Hausdorff dimension; $\mathcal{H}^s$ denotes the $s$-dimensional Hausdorff measure.

$X \lesssim Y$ means that $X\leq CY$ for a constant $C>0$. $X\lessapprox Y$ means for any $\epsilon>0$ there exists $C_\epsilon>0$ such that $X\leq C_\epsilon \delta^{-\epsilon} Y$. $X\approx Y$ means $X\lessapprox Y\lessapprox X$.

$\delta_k=\delta_k(\epsilon)$ denotes the hyperdyadic number $2^{-(1+\epsilon)^k}$.

Given $x, y\in \R^2$, $x\neq y$ let $l_{x,y}$ denote the line passing through $x,y$ and $l_y$ denote an arbitrary line passing through $y$. Let $P_y$ denote a pencil with tip $y$. 

$T(l, \delta)$ denotes the $\delta$-neighborhood of $l$. We say two lines are $\delta^\rho$-separated if they make an angle $>\delta^\rho$. We say two tubes are are $\delta^\rho$-separated if their central lines are $\delta^\rho$-separated.

\subsection*{Acknowledgments} We would like to thank Wei-Hsuan Yu for introducing us to each other.

\section{Preliminaries}
\subsection{Additive combinatorics}
\ 

The first lemma is one of fundamental tools in additive combinatorics. One can see, for example, Theorem 2.29 in \cite{TV06}.
\begin{lem}
	[Balog-Szemer\'edi-Gowers theorem] Suppose $A, B$ are finite subsets of an additive group $Z$, $G\subset A\times B$, $K>1$, satisfying $$\#(G)>\#(A)\#(B)/K$$ and
	$$\#(A\overset{G}{+}B):=\#(\{x+y: (x,y)\in G\})\leq K\#(A)^{1/2}\#(B)^{1/2}.$$
	Then there exist $A'\subset A$, $B'\subset B$ such that
	$$\#(A')\gtrsim\#(A)/K;$$
	$$\#(B')\gtrsim \#(B)/K;$$
	$$\#(A'+B')\lesssim K^{O(1)}\#(A)^{1/2}\#(B)^{1/2}.$$
\end{lem}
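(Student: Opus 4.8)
My plan is to recast the statement as a purely combinatorial fact about bipartite graphs and then read off the additive conclusion. Regard $G\subseteq A\times B$ as a bipartite graph; the two hypotheses say that $G$ has $\gtrsim \#(A)\#(B)/K$ edges while the ``additive image'' $A\overset{G}{+}B$ has size $\le K\,\#(A)^{1/2}\#(B)^{1/2}$. Everything will follow from the \emph{BSG graph lemma}: if $G$ is a bipartite graph between $A$ and $B$ with at least $\#(A)\#(B)/K$ edges, then there are $A'\subseteq A$ and $B'\subseteq B$ with $\#(A')\gtrsim \#(A)/K$ and $\#(B')\gtrsim \#(B)/K$ such that every $a\in A'$ and every $b\in B'$ are joined by at least $\#(A)\#(B)/K^{O(1)}$ paths of length $3$ in $G$. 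I would state and prove this lemma first, and deduce the theorem afterwards.

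Granting the lemma, the deduction I have in mind is short. A length-$3$ path $a\sim b'\sim a'\sim b$ in $G$ witnesses the identity $a+b=(a+b')-(a'+b')+(a'+b)$, and all three summands lie in $A\overset{G}{+}B$; moreover, once the endpoints $a,b$ are fixed, the ordered triple $(a+b',\,a'+b',\,a'+b)$ determines the path. Hence, picking for each $s\in A'+B'$ one representation $s=a+b$ with $a\in A'$, $b\in B'$, the lemma yields at least $\#(A)\#(B)/K^{O(1)}$ distinct triples $(s_1,s_2,s_3)\in(A\overset{G}{+}B)^3$ with $s_1-s_2+s_3=s$; summing over $s$ (each triple is counted for the single value $s_1-s_2+s_3$) gives
\[
\#(A'+B')\cdot\frac{\#(A)\#(B)}{K^{O(1)}}\ \le\ \#(A\overset{G}{+}B)^{3}\ \le\ \bigl(K\,\#(A)^{1/2}\#(B)^{1/2}\bigr)^{3},
\]
so $\#(A'+B')\lesssim K^{O(1)}\#(A)^{1/2}\#(B)^{1/2}$, which is exactly the desired bound.

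To prove the graph lemma I would run a random-restriction (dependent-random-choice) scheme. First clean $G$: delete from $A$ every vertex of degree $<\tfrac{\#(B)}{2K}$, which removes at most half the edges, so afterwards $G$ still has $\gtrsim \#(A)\#(B)/K$ edges and every surviving $a\in A$ has degree $\gtrsim\#(B)/K$; a symmetric step gives $\deg(b)\gtrsim\#(A)/K$ for surviving $b\in B$. Call $(a,a')\in A\times A$ \emph{bad} if $\#(N(a)\cap N(a'))<\#(B)/K^{c}$ for a large absolute exponent $c$; by Cauchy--Schwarz, $\sum_{a,a'}\#(N(a)\cap N(a'))=\sum_{b}\deg(b)^{2}\gtrsim \#(A)^{2}\#(B)/K^{2}$, so the bad pairs form a $K^{-(c-O(1))}$-proportion of $A\times A$, which is as small as we like. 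Now choose $b_0\in B$ at random and set $A_0=N(b_0)$. Running a second-moment (reverse-Markov) argument on $\#(A_0)^{2}-\lambda\cdot(\text{bad pairs inside }A_0)$ produces a single $b_0$ with $\#(A_0)\gtrsim\#(A)/K$ and with at most a $K^{-\Omega(c)}$-proportion of the pairs inside $A_0$ bad; deleting the vertices of $A_0$ lying in many bad pairs of $A_0$ leaves $A'$ with $\#(A')\gtrsim\#(A)/K$ in which every $a$ is bad-paired with at most a $(CK)^{-1}$-fraction of $A'$. Finally put $B'=\{b\in B:\#(N(b)\cap A')\gtrsim\#(A')/K\}$; since $A'$ keeps min-degree $\gtrsim\#(B)/K$, an edge count between $A'$ and $B$ gives $\#(B')\gtrsim\#(B)/K$. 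For $a\in A'$, $b\in B'$ one then bounds the number of $3$-paths from below by $\sum_{a'\in N(b)\cap A'}\#(N(a)\cap N(a'))$, keeps only the $a'$ that are good partners of $a$ (all but a $(CK)^{-1}$-fraction of $N(b)\cap A'$, hence still $\gtrsim\#(A')/K$ of them), and gets a lower bound $\gtrsim (\#(A')/K)\cdot(\#(B)/K^{c})\gtrsim \#(A)\#(B)/K^{O(1)}$.

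The hard part, as I see it, is not conceptual but the bookkeeping of the $K^{O(1)}$ exponents, and in particular the requirement that the ``bad'' threshold $\#(B)/K^{c}$ be chosen with $c$ large enough that the density of bad pairs beats \emph{both} the $1/K$-scale deletions used to pass from $A_0$ to $A'$ (so the cleaning does not destroy $A'$) \emph{and} the $1/K$-scale restriction used to define $B'$ (so the final $3$-path count still has $\gtrsim\#(A')/K$ good terms), while the random-restriction step must genuinely be phrased as ``$\mathbb{E}\bigl[\#(A_0)^{2}-\lambda\cdot(\text{bad pairs in }A_0)\bigr]>0$'' to extract one $b_0$ that is simultaneously good for size and for fewness of bad pairs. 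Making all these inequalities close with consistent constants, and making the extractions of $A'$ and $B'$ compatible, is where the care goes; the only tools needed are Cauchy--Schwarz, Markov's inequality, and a union bound.
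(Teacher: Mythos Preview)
The paper does not actually prove this lemma: it is stated as a preliminary and immediately referred to Theorem~2.29 in Tao--Vu \cite{TV06} (``One can see, for example, Theorem 2.29 in \cite{TV06}''). So there is nothing to compare against in the paper beyond that citation.

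Your sketch is the standard Gowers-style proof as presented in Tao--Vu: reduce to the bipartite-graph lemma (many $3$-paths between every pair in $A'\times B'$) via dependent random choice, then read off the sumset bound by writing $a+b=(a+b')-(a'+b')+(a'+b)$ along a path. The outline is correct and the points you flag as ``bookkeeping'' (choosing the bad-pair threshold exponent $c$ large enough, and packaging the random-restriction step as a single expectation inequality) are exactly where the care goes in that reference. If you want a self-contained write-up you can follow your plan essentially verbatim; alternatively, since the paper treats this as a black box, a one-line citation would match the paper's own treatment.
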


In the proof of $(ii)\implies(iii)$ in Theorem \ref{main-theorem-equivalence}, we shall use the following refinement lemma due to Katz and Tao.
\begin{lem}[Refinement 2.2 in \cite{KT01}]\label{Katz-Tao-Refinement}
Let $0<\delta \ll 1$ be a dyadic number, $ 0 < \sigma < 1$, $K\gg 1$ be a constant, and $A\subset [0,1]$ be a union of $\delta$-intervals with $|A| \lessapprox \delta^{1 - \sigma}$. Then one can find a set $A_{\delta '}$ for all dyadic numbers $\delta < \delta ' \leq 1$ which can be covered 
by $\lessapprox \delta^{K\epsilon}\delta'^{-\sigma}$ balls of radius $\delta '$, and a set $(\delta, \sigma)$-set $A^{*}$ contained in the $\delta$-neighborhood of $A$, such that
$$A \subset A^{*} \cup \bigcup_{\delta < \delta ' \leq 1} A_{\delta'}.$$
\end{lem}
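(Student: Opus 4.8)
The plan is to obtain the decomposition by a single round of pigeonholing on the mass of $A$ in dyadic balls, with a density threshold tuned so that the two conclusions --- that $A^{*}$ is non-concentrated and that the exceptional pieces lie in \emph{few} balls --- come out at once. It is enough to argue with dyadic intervals at dyadic scales: a ball $B$ of radius $r\in(\delta,1/2]$ is covered by at most three dyadic intervals of the dyadic scale $\delta'\in[r,2r)$ (and then $\delta<\delta'\le1$), while a ball of radius $r\in(1/2,1)$ is contained in $[0,1]$ and satisfies $|A^{*}\cap B|\le|A|\lessapprox\delta^{1-\sigma}\lesssim\delta(r/\delta)^{\sigma}$ outright; so the condition of Definition~\ref{def-KT} and the covering bounds need only be verified for dyadic balls at dyadic scales, at the cost of an $O(1)$ factor absorbed by $\lessapprox$. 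Throughout, $\epsilon$ denotes the fixed small parameter; since $K$ is a fixed constant, a loss of a fixed power $\delta^{-O(K\epsilon)}$ is harmless, and this is what the assertion ``$A^{*}$ is a $(\delta,\sigma)$-set'' is taken to permit here.

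Concretely, for each dyadic scale $\delta'$ with $\delta<\delta'\le1$ I would fix the threshold
\[
T_{\delta'}\ :=\ \delta^{-K\epsilon}\,\delta\,(\delta'/\delta)^{\sigma}\ =\ \delta^{-K\epsilon}\,\delta^{1-\sigma}\,{\delta'}^{\sigma},
\]
call a dyadic $\delta'$-interval $B$ \emph{heavy} when $|A\cap B|>T_{\delta'}$, and set
\[
A_{\delta'}\ :=\ A\cap\!\!\bigcup_{B\ \text{heavy}}\!\!B,\qquad A^{*}\ :=\ A\setminus\bigcup_{\delta<\delta'\le1}A_{\delta'}.
\]
Then $A\subset A^{*}\cup\bigcup_{\delta'}A_{\delta'}$ is immediate and $A^{*}\subset A$, so $A^{*}$ lies in the $\delta$-neighborhood of $A$. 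As the dyadic $\delta'$-intervals partition $[0,1]$, $\sum_{B}|A\cap B|=|A|\lessapprox\delta^{1-\sigma}$, and since every heavy interval contributes more than $T_{\delta'}$, the number of heavy dyadic $\delta'$-intervals is at most
\[
\frac{|A|}{T_{\delta'}}\ \lessapprox\ \frac{\delta^{1-\sigma}}{\delta^{-K\epsilon}\,\delta^{1-\sigma}\,{\delta'}^{\sigma}}\ =\ \delta^{K\epsilon}\,{\delta'}^{-\sigma};
\]
hence $A_{\delta'}$ is covered by $\lessapprox\delta^{K\epsilon}{\delta'}^{-\sigma}$ balls of radius $\delta'$ (and $A_{\delta'}=\varnothing$ once this count drops below $1$).

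It remains to check that $A^{*}$ is a $(\delta,\sigma)$-set. Let $B$ be a dyadic $\delta'$-interval, $\delta<\delta'\le1$. If $B$ is heavy then $A\cap B\subset A_{\delta'}$, hence $A^{*}\cap B=\varnothing$; if $B$ is not heavy then $|A^{*}\cap B|\le|A\cap B|\le T_{\delta'}$. In either case $|A^{*}\cap B|\le T_{\delta'}=\delta^{-K\epsilon}\,\delta\,(\delta'/\delta)^{\sigma}$, which is the bound of Definition~\ref{def-KT} up to the harmless fixed factor $\delta^{-K\epsilon}$; by the dyadic reduction of the first paragraph the same estimate holds for every ball of radius $r\in(\delta,1)$, so $A^{*}$ is a $(\delta,\sigma)$-set and the decomposition is complete. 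I do not expect a real obstacle here. The one genuinely load-bearing choice is the threshold $T_{\delta'}$: it must trade a $\delta^{-K\epsilon}$ loss in the local density of $A^{*}$ for a $\delta^{K\epsilon}$ gain in the number of heavy balls at each scale, and these two demands are exactly dual through the single mass budget $|A|\lessapprox\delta^{1-\sigma}$ --- which is why one threshold serves both. Everything else (the dyadic-to-arbitrary ball passage, the endpoint scales $r\approx\delta$ and $r\approx1$, and the tracking of $O(1)$'s) is routine bookkeeping.
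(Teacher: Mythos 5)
Your construction is essentially the paper's: you use the identical threshold $T_{\delta'}=\delta^{-K\epsilon}\delta(\delta'/\delta)^{\sigma}$ and the same stopping-time decomposition into ``heavy at some scale'' (placed in $A_{\delta'}$) versus ``light at every scale'' (placed in $A^*$), with the only difference being that you work with heavy dyadic intervals while the paper defines $A_{\delta'}$ as the set of centers of heavy balls. That variation is cosmetic --- it trades the paper's implicit Vitali-type covering step for your explicit dyadic-to-arbitrary-ball bookkeeping --- and both yield the stated bounds.
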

\begin{proof}
	Let
	$$A_{\delta'}:=\{x\in\R: |A\cap B(x, \delta')|\geq \delta^{-K\epsilon}\delta(\delta'/\delta)^{\sigma}\}$$
	and $A^*$ be the $\delta$-neighborhood of
	$$A\backslash \bigcup_{\delta<\delta'\leq 1} A_{\delta'}.$$
\end{proof}

\subsection{Geometric measure theory}\ 

There are natural probability measures on Borel sets in $\R^d$ that demonstrate its Hausdorff dimension. See, for example, Section 2.5 in \cite{Mat15}.
\begin{lem}
	[Frostman Lemma]
	Suppose $E$ is a Borel set in $\R^d$. Then for any $0\leq s<\dH E$ there exists a probability measure $\mu$ on $E$ such that for any $x\in\R^d$, $r>0$,
	$$\mu(B(x,r))\lesssim r^s. $$
\end{lem}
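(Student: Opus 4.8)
The plan is to realise $\mu$ as a weak-$*$ limit of measures built by the classical mass--distribution construction on dyadic cubes, after first reducing to the case where $E$ is compact.

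\emph{Reduction to a compact set.} Fix $t$ with $s<t<\dH E$. Then $\mathcal{H}^t(E)=\infty$, and since $E$ is Borel it contains a compact subset $K$ with $0<\mathcal{H}^t(K)<\infty$ --- a standard, if not entirely elementary, property of Hausdorff measures of Borel (indeed analytic) sets. Then $\dH K\ge t>s$, so it suffices to build $\mu$ supported on $K$, and after rescaling and translating we may assume $K\subset[0,1)^d$. I will also use the elementary fact that $\dH K>s$ forces the $s$-dimensional Hausdorff content to be positive, $\mathcal{H}^s_\infty(K)=:c>0$; that is, $\sum_i(\diam U_i)^s\ge c$ for every countable cover $\{U_i\}$ of $K$.

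\emph{The construction.} For each integer $m\ge 1$ let $\mathcal{D}_j$ ($0\le j\le m$) be the dyadic subcubes of $[0,1)^d$ of side $2^{-j}$, and define a measure $\nu_m$ as follows: first place mass $2^{-ms}$, spread uniformly, on each cube of $\mathcal{D}_m$ meeting $K$; then sweep upward through the generations $j=m-1,m-2,\dots,0$, and whenever the total mass currently inside some $Q\in\mathcal{D}_j$ exceeds $(\diam Q)^s$ rescale all the mass inside $Q$ down so that this total equals $(\diam Q)^s$. The resulting $\nu_m$ is supported on the union of the generation-$m$ cubes meeting $K$ and satisfies $\nu_m(Q)\le(\diam Q)^s$ for every dyadic $Q$; comparing an arbitrary ball $B(x,r)$ with the boundedly many dyadic cubes of comparable side that it meets (and using $s<d$ when $r<2^{-m}$) this upgrades to $\nu_m(B(x,r))\lesssim r^s$ with an implied constant depending only on $d$. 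Moreover $\nu_m(\R^d)\ge c$: letting $\mathcal{Q}$ be the family of maximal dyadic cubes at which a rescaling was active, together with the generation-$m$ cubes meeting $K$ that have no such ancestor, one checks that $\mathcal{Q}$ is a disjoint family whose union contains $K$ and that each $Q\in\mathcal{Q}$ carries its full mass $\nu_m(Q)=(\diam Q)^s$, so $\nu_m(\R^d)\ge\sum_{Q\in\mathcal{Q}}(\diam Q)^s\ge\mathcal{H}^s_\infty(K)=c$.

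\emph{Passage to the limit.} The measures $\nu_m$ are all supported in $[0,1]^d$ with $c\le\nu_m(\R^d)\le(\sqrt d)^s$, so a subsequence converges weak-$*$ to a measure $\nu$. Lower semicontinuity on the open balls, $\nu(B(x,r))\le\liminf_m\nu_m(B(x,r))$, yields $\nu(B(x,r))\lesssim r^s$ for all $x,r$, so in particular $\nu(\R^d)<\infty$; upper semicontinuity on the closed cube gives $\nu(\R^d)\ge\limsup_m\nu_m([0,1]^d)\ge c>0$; and since $\supp\nu_m$ lies within distance $\sqrt d\,2^{-m}$ of $K$ and $K$ is closed, $\supp\nu\subset K\subset E$. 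Then $\mu:=\nu/\nu(\R^d)$ is the desired Frostman measure. The only real work is the bookkeeping in the upward sweep --- verifying that the cap is never violated and that the maximal ``saturated'' cubes simultaneously tile a cover of $K$ and retain their full mass --- because this is precisely the step that converts positivity of the Hausdorff content into the uniform lower bound $\nu_m(\R^d)\ge c$; the compact reduction and the weak-$*$ extraction are routine once it is in place.
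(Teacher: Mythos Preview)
Your argument is correct and is essentially the classical mass--distribution proof of Frostman's lemma (as in, e.g., Mattila's \emph{Fourier Analysis and Hausdorff Dimension}, \S2.5, or his earlier book). One small cosmetic point: you initialise each generation-$m$ cube with mass $2^{-ms}$ but then cap at $(\diam Q)^s$, and these differ by the factor $d^{s/2}$; this only affects constants, so the lower bound $\nu_m(\R^d)\gtrsim \mathcal{H}^s_\infty(K)$ still holds with an implied constant depending on $d$.

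As for comparison with the paper: the paper does not prove this lemma at all. It is quoted as a standard tool with a reference to \cite{Mat15}, \S2.5, and no argument is given. So there is nothing to compare your approach against --- you have supplied a full proof where the paper supplies only a citation, and your proof is the same one found at that citation.
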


The technical reason why we need $E$ not in a line is the following.
\begin{lem}[Lemma 2.1 in \cite{Orp19}]\label{lemma-not-in-line}
Assume that $\mu$ is a Borel probability measure on the unit ball, and $\mu(l)=0$ for all lines $l$. Then, for any $\epsilon>0$, there exists $\delta>0$ such that $\mu(T(l,\delta))\leq\epsilon$ for any line $l$.	
\end{lem}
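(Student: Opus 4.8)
The plan is a compactness argument on the space of lines. First I would record the pointwise version: for a \emph{fixed} line $l$, since $\mu$ is a finite measure and $\bigcap_{\rho>0}T(l,\rho)=l$, downward continuity gives $\mu(T(l,\rho))\downarrow\mu(l)=0$ as $\rho\downarrow 0$. The entire content of the lemma is that the threshold can be chosen \emph{uniformly} in $l$, and that is exactly what compactness of the relevant parameter space will supply.

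So I would argue by contradiction: suppose there are $\epsilon>0$, lines $l_n$, and $\delta_n\downarrow 0$ with $\mu(T(l_n,\delta_n))>\epsilon$ for all $n$. Since $\mu$ is carried by the closed unit ball $\overline{B(0,1)}$, the set $T(l_n,\delta_n)$ must meet it, so once $\delta_n<1$ the line $l_n$ meets $\overline{B(0,2)}$. The lines meeting $\overline{B(0,2)}$ form a compact space: parametrize $l_n$ by its angle $\theta_n\in[0,\pi]$ and its signed distance $t_n\in[-2,2]$ to the origin. After passing to a subsequence, $(\theta_n,t_n)\to(\theta,t)$, i.e.\ $l_n\to l$ for a line $l$ also meeting $\overline{B(0,2)}$; continuity of the parametrization then forces $\rho_n:=\sup\{\dist(y,l):y\in l_n\cap\overline{B(0,2)}\}\to 0$.

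Now fix $\rho>0$ and take $n$ so large that $\delta_n+\rho_n<\rho$. For $x\in T(l_n,\delta_n)\cap\overline{B(0,1)}$ let $x'$ be the nearest point of $l_n$ to $x$; then $|x-x'|\le\delta_n<1$, hence $x'\in l_n\cap\overline{B(0,2)}$ and $\dist(x,l)\le|x-x'|+\dist(x',l)\le\delta_n+\rho_n<\rho$. Thus $T(l_n,\delta_n)\cap\overline{B(0,1)}\subset T(l,\rho)$, and since $\mu$ lives on $\overline{B(0,1)}$,
$$\mu(T(l,\rho))\ge\mu\big(T(l_n,\delta_n)\cap\overline{B(0,1)}\big)=\mu(T(l_n,\delta_n))>\epsilon.$$
As $\rho>0$ was arbitrary, $\mu(T(l,\rho))>\epsilon$ for every $\rho>0$; letting $\rho\downarrow 0$ and using downward continuity once more gives $\mu(l)\ge\epsilon>0$, contradicting $\mu(l)=0$. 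I expect the only point requiring care to be the bookkeeping with the ball: lines are non-compact, so compactness must be applied to the bounded family of lines actually meeting a fixed neighbourhood of $\supp\mu$, and one must check that convergence of lines forces uniform convergence of the relevant \emph{segments} inside the unit ball, which is what legitimizes the tube inclusion above. Everything else is routine.
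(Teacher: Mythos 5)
Your argument is correct and follows the same compactness-of-lines route as the paper's sketch: extract a convergent subsequence of lines, show the nested narrow tubes around the limit line $l$ still carry mass $\ge\epsilon$, and conclude $\mu(l)\ge\epsilon$, a contradiction. You have simply filled in the details (the bounded parametrization of lines meeting a fixed ball and the explicit tube inclusion) that the paper leaves implicit.
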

\begin{proof}
	[Sketch of the proof] If it fails for some $\epsilon>0$, there exists a sequence of lines $l_j$ and a line $l$ such that $\mu(l_j)\geq\epsilon$, $l_j\rightarrow l$ under Hausdorff metric. This implies that there exists a sequence $\delta_j\searrow 0$ such that $\mu(T(l, \delta_j))\geq\epsilon $, and therefore $\mu(l)\geq\epsilon$, contradiction.
\end{proof}

We learn the following discretization of fractals from Lemma 7.5 in \cite{KT01}. We state a different version.
\begin{lem}\label{discretize-fractal}
Let $A\subset\R^d$ be compact. If $\dH (A)<\sigma$, then there exist $\epsilon>0$ such that, for any $k_0>0$, there exists a family of $(\delta_k(\epsilon), \sigma)_d$-sets $X^\sigma_{k}$, $k\geq k_0$, that cover $A$.
\end{lem}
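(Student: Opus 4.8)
The plan is to discretize a Hausdorff cover of $A$, one hyperdyadic scale at a time. Since $\dH(A)<\sigma$, fix $\sigma_1\in(\dH A,\sigma)$, so that $\mathcal{H}^{\sigma_1}(A)=0$; in particular, for every $\rho>0$ there are covers of $A$ by balls of radius $\le\rho$ whose $\sigma_1$-content $\sum r_j^{\sigma_1}$ is as small as we wish. The only essential constraint I will put on $\epsilon$ is $(1+\epsilon)\sigma_1<\sigma$ (plus a mild extra smallness condition below); since $\delta_{k+1}=\delta_k^{1+\epsilon}$, this is where the dependence of $\epsilon$ on $\dH A$ and $\sigma$ enters. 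Given $k_0$, use compactness of $A$ to choose a finite cover $A\subseteq\bigcup_{j\in J}B(x_j,r_j)$ with all $r_j\le\delta_{k_0}$ and $\sum_{j\in J}r_j^{\sigma_1}\le\delta_{k_0+1}^{\sigma_1}$. Partition $J$ into radius bands $J_k=\{j:\delta_{k+1}<r_j\le\delta_k\}$ for $k\ge k_0$, and set $X_k^\sigma:=\bigcup_{j\in J_k}B(x_j,r_j)$, so that $A\subseteq\bigcup_{k\ge k_0}X_k^\sigma$ automatically. It then remains only to arrange that each $X_k^\sigma$ — after one further splitting — is a $(\delta_k,\sigma)_d$-set.

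Two of the three ranges of the scale parameter $r$ in the $(\delta_k,\sigma)_d$-condition are painless. A single ball of radius $\le\delta_k$ is trivially a $(\delta_k,\sigma)_d$-set, and since every ball of $X_k^\sigma$ has radius $\le\delta_k$ one has $|X_k^\sigma\cap B(a,r)|\lesssim\delta_k^d\cdot\#\{j\in J_k:x_j\in B(a,2r)\}$ for $r\ge\delta_k$; so it suffices to bound $\#\{j\in J_k:x_j\in B(a,2r)\}\lessapprox(r/\delta_k)^\sigma$ for all $a$ and all $\delta_k\le r\le 1$. Because $r_j>\delta_{k+1}$ on $J_k$, the choice of $\sigma_1$-content gives the global bound $\#J_k\le\delta_{k_0+1}^{\sigma_1}\delta_{k+1}^{-\sigma_1}=(\delta_{k_0}/\delta_k)^{(1+\epsilon)\sigma_1}\le(\delta_{k_0}/\delta_k)^\sigma$, using $(1+\epsilon)\sigma_1<\sigma$ and $\delta_k\le\delta_{k_0}$; this settles every $r\ge\delta_{k_0}$ at one stroke, and after thinning the centers of $J_k$ to be $\delta_k$-separated (which does not increase $\#J_k$), the range where $r$ is comparable to $\delta_k$ is just the packing estimate. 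In particular one also records $|X_k^\sigma|\lesssim\delta_k^{d-\sigma}$.

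The only surviving range is $\delta_k\ll r\le\delta_{k_0}$, and this is the real content: nothing so far prevents the centers of $J_k$ from piling up in some ball $B(a,r)$ far beyond the rate $(r/\delta_k)^\sigma$, since their $\sigma_1$-content is consistent with that. This is exactly the phenomenon controlled by the Katz–Tao refinement (\lemref{Katz-Tao-Refinement}), which I would apply with $X_k^\sigma$ in the role of the ambient set and $\delta_k$ in the role of $\delta$ (the needed hypothesis $|X_k^\sigma|\lesssim\delta_k^{d-\sigma}$ was just verified): for each dyadic $\rho\in(\delta_k,\delta_{k_0}]$ one peels off from $X_k^\sigma$ the over-dense set $\{x:|X_k^\sigma\cap B(x,\rho)|>\delta_k^{-K\epsilon}\delta_k^d(\rho/\delta_k)^\sigma\}$, which is covered by only $\lesssim\delta_k^{K\epsilon}\rho^{-\sigma}$ balls of radius $\rho$, hence — after subdividing to the hyperdyadic scale $\delta_{k'}\le\rho$, necessarily with $k_0\le k'<k$ since $\rho\le\delta_{k_0}$ — can be absorbed into $X_{k'}^\sigma$; the unpeeled remainder is then a $(\delta_k,\sigma)_d$-set (with the harmless $\delta_k^{-K\epsilon}$ in the constant, $\epsilon$ being a fixed small parameter). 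Running this over $a$ and dyadic $\rho$ and iterating — it terminates because each redistribution sends content strictly upward among the finitely many scales $\delta_{k_0},\dots,\delta_k$, while the $\delta^{K\epsilon}$ gains keep the total deposited at any fixed scale $\delta_{k'}$ within its $\delta_{k'}^{-\sigma}$ budget, provided $K$ is a suitable constant and $\epsilon$ is small enough (say $K\epsilon<\sigma-(1+\epsilon)\sigma_1$) — produces the required family $\{X_k^\sigma\}_{k\ge k_0}$. I expect this peeling-and-redistribution bookkeeping to be the bulk of the work; everything else is routine once $\sigma_1$ and $\epsilon$ have been fixed as above.
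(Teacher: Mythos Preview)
Your approach is correct and reaches the same conclusion, but by a different mechanism than the paper (which is following Katz--Tao, Lemma~7.5 in \cite{KT01}). You partition a $\sigma_1$-Hausdorff cover into hyperdyadic radius bands and then repair each resulting $X_k^\sigma$ at the intermediate scales by iteratively applying the Katz--Tao refinement (\lemref{Katz-Tao-Refinement}), redistributing over-dense pieces upward to coarser scales and relying on the $\delta_k^{K\epsilon}$ gain to keep the budgets bounded. The paper instead, for each hyperdyadic radius $r$ occurring in the cover, replaces the union $Y_r$ of $r$-balls by a cover $\mathbf{Q}_r$ of hyperdyadic cubes of side-length $\geq r$ that \emph{minimizes} $\sum_{Q\in\mathbf{Q}_r}\diam(Q)^\sigma$; minimality then yields non-concentration in one stroke (if more than $C(r'/\delta_k)^\sigma$ cubes of size $\delta_k$ from $\mathbf{Q}_r$ clustered in a ball of radius $r'$, merging them into a single larger cube would decrease the sum). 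The only residual issue in the paper's route is bounding how many distinct $r$ can contribute cubes of a fixed size $\delta_k$, which is where the slack $\epsilon'>0$ (your $\sigma-\sigma_1$) enters. Your iterative scheme trades that counting argument for the redistribution bookkeeping you flag as ``the bulk of the work''; both succeed, but the minimization is shorter and sidesteps the question of where pieces go that would otherwise redistribute past the coarsest scale $\delta_{k_0}$ --- a point your sketch leaves implicit.
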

\begin{proof}
	[Sketch of the proof] There exist $\epsilon, \epsilon'>0$ such that, for every $k_0$ large enough, one can find a collection of hyperdyadic balls $B_{r_i}$, $r_i\leq \delta_{k_0}(\epsilon)$, that cover $A$, and 
	$$\sum r_i^{\sigma-\epsilon'}\ll 1.$$
	For each hyperdyadic number $r$, denote
	$$Y_{r}:=\bigcup_{r_i=r} B_{r_i}$$
	and let $\bf{Q}_r$ be a collection of hyperdyadic cubes $Q$ of side-length at least $r$ that cover $Y_r$, and minimize the quantity
	$$\sum_{Q\in \bf{Q}_r} \diam (Q)^\sigma.$$

	Finally we take $$X^\sigma_k=\bigcup_{r}\bigcup_{Q\in \bf{Q}_r,\, \diam(Q)=\delta_k} Q +B(0, \delta_k).$$
Notice the minimality implies the non-concentration condition, and $\epsilon'>0$ guarantees that each $k$ is associated to finitely many $r$.

	For a detailed proof, we refer to Section 7 in \cite{KT01}. We point out a typo there that the assumption is supposed to be $\dH (A)<\alpha$, and the exponent in (38) in \cite{KT01} should be $\alpha-C\epsilon$. Then in the end $\delta^{\alpha/\epsilon}\lesssim r<c$, which guarantees $\lesssim\log(1/\delta)^2$ options for hyperdyadic numbers $r, c<1$ associated to each $\delta$.
\end{proof}

\begin{lem}[A pigeonholing lemma]\label{pigeonholing-lemma}
	Suppose $(X, \mu)$ is a measurable space, $\mu$ is a finite measure, and $X_1,\dots,X_M$ are measurable subsets of $X$ such that $\mu(X_i)\geq \lambda\mu(X)$, $M\lambda>2$. Then there exist $1\leq i<j\leq M$ such that
	$$\mu(X_i\cap X_j)\geq \frac{\lambda^2}{2}\,\mu(X). $$
\end{lem}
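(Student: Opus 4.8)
The plan is to run the standard second-moment (Cauchy--Schwarz) argument on the counting function $f=\sum_{i=1}^M \mathbf 1_{X_i}$ on $X$. First dispose of the trivial case $\mu(X)=0$, in which all the quantities involved vanish; and observe that the hypotheses $\mu(X_i)\geq\lambda\mu(X)$ and $M\lambda>2$ force $0<\lambda\leq 1$ and hence $M\geq 3$, so in particular $M(M-1)>0$. Write $m=\mu(X)>0$ and $S=\int_X f\,d\mu=\sum_{i=1}^M\mu(X_i)$, so that $M\lambda m\leq S\leq Mm$.

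Next, by Cauchy--Schwarz, $S^2=\bigl(\int_X f\,d\mu\bigr)^2\leq \mu(X)\int_X f^2\,d\mu=m\int_X f^2\,d\mu$, so $\int_X f^2\,d\mu\geq S^2/m$. Expanding the square and peeling off the diagonal terms, $\int_X f^2\,d\mu=\sum_{i,j}\mu(X_i\cap X_j)=S+\sum_{i\neq j}\mu(X_i\cap X_j)$, whence $\sum_{i\neq j}\mu(X_i\cap X_j)\geq S^2/m-S=S(S-m)/m$. Since $t\mapsto t(t-m)$ is increasing for $t\geq m/2$ and $S\geq M\lambda m>2m$, we may replace $S$ by its lower bound $M\lambda m$ to get $\sum_{i\neq j}\mu(X_i\cap X_j)\geq M\lambda m\,(M\lambda-1)$.

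Finally, average over the $M(M-1)$ ordered pairs $(i,j)$ with $i\neq j$: some such pair satisfies $\mu(X_i\cap X_j)\geq \lambda m\,(M\lambda-1)/(M-1)$. The elementary inequality $M\lambda-1\geq \lambda(M-1)/2$ --- equivalent to $M\lambda+\lambda\geq 2$, which is precisely where the hypothesis $M\lambda>2$ enters --- then yields $\mu(X_i\cap X_j)\geq \lambda^2 m/2=\tfrac{\lambda^2}{2}\,\mu(X)$; relabelling so that $i<j$ completes the proof.

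There is no real obstacle here; the only subtlety is the constant. Averaging $\int f^2\,d\mu$ crudely over all $\binom{M}{2}$ pairs (e.g.\ using only $S\leq Mm$ to bound the diagonal contribution) loses a factor and can fail to reach $\lambda^2/2$ when $M\lambda$ is only slightly larger than $2$; keeping the sharp bound $S\geq M\lambda m$ throughout and invoking $M\lambda>2$ exactly in the last step is what pins down the stated constant $\tfrac{\lambda^2}{2}$.
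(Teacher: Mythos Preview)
Your argument is correct and follows exactly the paper's approach: Cauchy--Schwarz on $f=\sum_i \mathbf 1_{X_i}$, expand $\int f^2$, subtract the diagonal, and pigeonhole over the $M(M-1)$ off-diagonal pairs. The paper's proof is terser and ends with ``the lemma follows by pigeonholing,'' leaving implicit precisely the last step you wrote out --- namely that $\dfrac{\lambda(M\lambda-1)}{M-1}\geq \dfrac{\lambda^2}{2}$ reduces to $(M+1)\lambda\geq 2$, which is immediate from $M\lambda>2$.
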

\begin{proof}
	Notice
	$$M\lambda\mu(X)\sum\mu(X_i)\leq\left(\sum\mu(X_i)\right)^2=\left(\int_X\sum 1_{X_i} d\mu \right)^2\leq \mu(X)\cdot\int\left|\sum 1_{X_i}\right|^2\,d\mu.$$
	Write
	$$\int\left|\sum 1_{X_i}\right|^2\,d\mu=\sum\mu(X_i)+\sum_{i\neq j}\mu(X_i\cap X_j). $$
	Hence
	$$\sum_{i\neq j}\mu(X_i\cap X_j)\geq (M\lambda-1)\sum\mu(X_i)\geq M\lambda(M\lambda-1)\mu(X)$$
	and the lemma follows by pigeonholing.

\end{proof}

\section{Proof of Theorem \ref{main-theorem-equivalence}: $(i)\implies(ii)$}
Suppose $(ii)$ fails, namely
\begin{equation}\label{ii-fail}\left|\bigcap_{1\leq i\leq 4} P_i\cap [0,1]^2 \right| \approx \delta^{2-2\sigma}.\end{equation}
It implies that $|\pi^{p_i}(P_i)|\approx \delta^{1-\sigma}$, $i=1,2,3,4$.

By projective transformations, we may assume $P_1=A\times\R$, $P_2=\R\times B$, $p_3=(0,0)$, $p_4=(t_0, 1)$, where $t_0\in[-1,1]$, $A, B\subset[1/4,1/2]$ are $(\delta,\sigma)$-sets, $|A|\approx|B|\approx\delta^{1-\sigma}$. 

Let us focus on $P_3$ first. Discretize $A, B$ to
$$A_d=\{a\in\delta\Z: \dist(e^a, A)<\delta\},$$ $$B_d=\{b\in\delta\Z: \dist(e^b, B)<\delta\},$$
then $\#(A_d)\approx\#(B_d)\approx\delta^{-\sigma}$.

Let 
$$G=\{(a,b)\in A_d\times B_d: \dist((e^a, e^b), P_3)<\delta\}$$
By \eqref{ii-fail}, $\#(G)\approx \delta^{-2\sigma}$ and $\#(A_d\overset{G}{-} B_d)\approx \delta^{-\sigma}$.
 Then we apply the Balog-Szemer\'edi-Gowers theorem to obtain $A'_d\subset A_d$, $B'_d\subset B_d$ with $$\#(A'_d)\approx \#(B'_d)\approx\#(A'_d-B'_d)\approx \delta^{-\sigma},$$ and then corresponding $A'\subset A$, $B'\subset B$ with
 $$|A'|\approx|B'|\approx|A'/B'|\approx\delta^{1-\sigma}.$$

It is similar to work with $P_4$. Discretize $A', B'$ to
$$A'_{d'}=\{a\in\delta\Z: \dist(t_0-e^a, A')<\delta\},$$ $$B'_{d'}=\{b\in\delta\Z: \dist(1-e^b, B')<\delta\},$$
then $\#(A'_{d'})\approx\#(B'_{d'})\approx\delta^{-\sigma}$.

Let 
$$G'=\{(a,b)\in A'_{d'}\times B'_{d'}: \dist((t_0-e^a, 1-e^b), P_4)<\delta\}$$
By \eqref{ii-fail}, $\#(G')\approx \delta^{-2\sigma}$ and $\#(A'_{d'}\overset{G'}{-} B'_{d'})\approx \delta^{-\sigma}$.
 Then we apply the Balog-Szemer\'edi-Gowers theorem to obtain $A''_{d'}\subset A'_{d'}$, $B''_{d'}\subset B'_{d'}$, $$\#(A''_{d'})\approx \#(B'_{d'})\approx\#(A''_{d'}-B''_{d'})\approx \delta^{-\sigma}.$$
 By Ruzsa triangle inequality we also have $\#(B''_{d'}-B''_{d'})\approx \delta^{-\sigma}$. Therefore there exist $A''\subset A'$, $B''\subset B'$ with
 $$|A''|\approx|B''|\approx|(t_0-A'')/(1-B'')|\approx|(1-B'')/(1-B'')|\approx\delta^{1-\sigma}.$$
 Also 
 $$\delta^{1-\sigma}\approx|A''|\leq|A''/B''|\leq |A'/B'|\approx\delta^{1-\sigma}.$$

For convenience, from now we denote $A=A''$, $B=B''$. Then
$$|A|\approx |B|\approx |A/B|\approx |(t_0-A)/(1-B)|\approx \left|(1-B)/(1-B)\right|\approx\delta^{1-\sigma}.$$

We shall show it contradicts Theorem \ref{Bourgain-epsilon}. Denote
$$X:=\left\{(b, b', x)\in B\times B\times A/B: \frac{t_0-bx}{1-b'}\in \frac{t_0-A}{1-B}\right\}.$$

Notice for any fixed $b, b'\in B$ and any $a\in A$, one can take $x=a/b$ to have
$$\frac{t_0-bx}{1-b'}=\frac{t_0-a}{1-b'}\in\frac{t_0-A}{1-B}.$$

Therefore
$$|X|\approx \delta^{3-3\sigma}.$$

Write
$$\frac{t_0-bx}{1-b'}= \frac{t_0}{1-b'}-\left(\frac{1}{1-b'}-\frac{1-b}{1-b'}\right)x$$
and change variables
$$(u,v)=\left(\frac{1-b}{1-b'}, \frac{1}{1-b'}\right)\in \frac{1-B}{1-B}\times\frac{1}{1-B}.$$

It follows that
$$\left|\left\{(u, v, x)\in \frac{1-B}{1-B}\times \frac{1}{1-B}\times A/B: t_0v-(v-u)x\in \frac{t_0-A}{1-B}\right\}\right|\approx|X|\approx\delta^{3-3\sigma}. $$

Since $|A/B|\approx \delta^{1-\sigma}$, there exists $x_0\in A/B$, $x_0\neq t_0$, such that
$$\left|\left\{(u, v)\in \frac{1-B}{1-B}\times \frac{1}{1-B}: t_0v-(v-u)x_0\in \frac{t_0-A}{1-B}\right\}\right|\gtrapprox\delta^{2-2\sigma}\approx\left|\frac{1-B}{1-B}\right|\cdot\left| \frac{1}{1-B}\right|. $$

Denote the set in the left hand side by $G$ and write $$t_0v-(v-u)x_0=(t_0-x_0)v+x_0u.$$

Since $G$ is a refinement of $(1-B)/(1-B)\times 1/(1-B)$, by the Balog-Szemer\'edi-Gowers theorem there exists $C\subset (1-B)/(1-B)$, $D\subset 1/(1-B)$ such that
$$|C|\approx|D|\approx|(t_0-x_0)D+x_0C|\approx \delta^{1-\sigma},$$
which implies $|D+D|\approx \delta^{1-\sigma}$. On the other hand
$$|DD|\leq |(1-B)(1-B)|\lessapprox\delta^{1-\sigma},$$
a contradiction to Theorem \ref{Bourgain-epsilon}.


\section{Proof of Theorem \ref{main-theorem-equivalence}: $(ii)\implies(iii)$}
Suppose $(iii)$ fails:
$$\max\{|AA|, |(1-A)(1-A)|\}\approx |A|\approx\delta^{1-\sigma}.$$
Consider $4$ pencils of tubes: 
\begin{itemize}
	\item $P_1$: vertical tubes $A\times\R$;
	\item $P_2$: horizontal tubes $\R\times A$;
	\item $P_3$: $\R(A\times A)$, with tip $p_3=(0,0)$;
	\item $P_4$: $\R(1-A)(1-A)+(1, 1)$, with tip $p_4=(1,1)$.
\end{itemize}
Notice $A\times A$ is contained in the intersection of these pencils.

To apply Theorem \ref{main-pencil}, it remains to check the non-concentration condition. It holds on $P_1$ and $P_2$, while not guaranteed on others. Therefore we apply Lemma \ref{Katz-Tao-Refinement} to direction sets $\pi^{p_i}(P_i)$, $i=3,4$ to obtain $(\delta, \sigma)$-pencils $P_3^*$, $P_4^*$, contained in the $\delta$-neighborhood of $P_3$, $P_4$, such that for any dyadic number $\delta<\delta'\leq 1$, there exist $(P_i)_{\delta'}$, $i=3,4$, that can be covered by $\delta^{K\epsilon}\delta'^{-\sigma}$ tubes of radius $\delta'$, and
$$P_i\subset P_i^*\cup \bigcup_{\delta<\delta'\leq\delta^{\epsilon_1}}(P_i)_{\delta'},\ i=3,4. $$

By the non-concentration condition on $A$, for each $\delta'$-tube $T$,
$$|T\cap (A\times A)|\lesssim_{\epsilon} \delta^{-\epsilon}{\delta'}^\sigma |A|^2.$$
Therefore by Lemma \ref{Katz-Tao-Refinement}
$$\left|\bigcup_{\delta<\delta'\leq 1} (P_i)_{\delta'}\cap (A\times A)\right|\lesssim_\epsilon \sum_{\delta<\delta'\leq 1} \delta^{(K-1)\epsilon}\delta'^{-\sigma}\cdot \delta'^\sigma |A|^2\lesssim_\epsilon\delta^{(K-2)\epsilon}|A|^2,$$
negligible to $A\times A$. Hence by Theorem \ref{main-pencil},
$$\delta^{2-2\sigma}\approx|A|^2\lesssim \left|\bigcap_{i=1}^4 P_i\right|\approx \left|P_1\cap P_2\cap P_3^*\cap P_4^*\right|\lesssim \delta^{2-2\sigma+\epsilon_0},$$
contradiction.

\section{Proof of Theorem \ref{main-theorem-equivalence}: $(iii)\implies(i)$}
Say $|A+A|=K|A|$. Since $\chi_A*\chi_A$ is supported on $A+A$ and
$$|A|^2=\int \chi_A*\chi_A,$$
there exists $z\in A+A\subset [1/2, 1]$ such that
$$|(z-A)\cap A|=\chi_A*\chi_A(z) \geq \frac{|A|^2}{|A+A|}=|A|/K. $$

Take $A_z= \frac{A}{z} \cap (1- \frac{A}{z})$. By $(iii)$ in Theorem \ref{main-theorem-equivalence}, 
$$\max \{|A_zA_z|, |(1-A_z)(1-A_z)|\} \gtrsim \delta^{-\epsilon_0} |A_z|\gtrsim \delta^{-\epsilon_0} |A|/K.$$

Since by our construction both 
$$A_zA_z,\  (1-A_z)(1-A_z) \subset \frac{A \cdot A}{z^2},$$
it follows that
$$|AA|\gtrsim \delta^{-\epsilon_0} |A|/K.$$
Finally we choose $K$ to obtain
$$\max \{|A+A|, |AA|\} \gtrsim \delta^{-\epsilon_0/2}\,|A|.$$

\section{A tube condition on Frostman measures}
In many cases people need to control measures of $\delta$-tubes. Unfortunately on Frostman measures only ball condition is given, while no tube condition is generally guaranteed. The following proposition shows, given two Frostman measures $\mu$, $\nu$ not in a line, one can always refine $\supp \mu\times\supp \nu$ carefully to ensure each tube determined by remaining pairs $(x, y)\in\supp\mu\times\supp \nu$ is under control. 

\begin{prop}\label{prop-tube-condition}
Suppose $\mu$, $\nu$ are probability measures on disjoint compact set $E, F\subset \R^2$ respectively, $\mu(l)=\nu(l)=0$ for any line $l$, and there exist $s_\mu, s_\nu\in[0,2]$, $c_\mu, c_\nu>0$ such that for any $x\in\R^2$, $r>0$,
$$\mu(B(x, r))\leq c_{\mu} r^{s_\mu},\ \nu(B(x, r))\leq c_{\nu} r^{s_\nu}.$$
Then there exists $\kappa=\kappa(s_\mu, s_\nu)>0$, a compact set $G\subset E\times F$, $\mu\times\nu (G)>0$, and a constant $C=C(\mu,\nu)>0$, such that for any line $l_{x,y}$ determined by $(x, y)\in G$,
$$\mu\left(T(l_{x,y}, r)\right), \,\nu\left(T(l_{x,y}, r)\right)\leq C r^{\kappa},\ \forall\, r>0.$$
\end{prop}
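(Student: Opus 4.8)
The plan is to build $G$ by iteratively removing ``bad'' pairs, where a pair $(x,y)$ is bad at scale $r$ if the line $l_{x,y}$ carries too much $\mu$- or $\nu$-mass. First I would fix the key scale input: since $\mu,\nu$ give zero mass to every line, Lemma \ref{lemma-not-in-line} provides, for each $\eta>0$, a threshold $\delta(\eta)>0$ with $\mu(T(l,\delta)),\nu(T(l,\delta))\leq\eta$ for all lines $l$. I would like to upgrade this qualitative statement to a quantitative one of the form $\mu(T(l,r))\lesssim r^{\kappa_0}$ for all $l$ and all $r>0$, but this is false in general (a Frostman measure can put mass $r^{s}$ near a single line), so the honest move is to accept that \emph{some} pairs are bad and to control how many. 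The core estimate is a double-counting bound: for a fixed scale $r$, set
$$B_r:=\{(x,y)\in E\times F: \mu(T(l_{x,y},r))>r^{\kappa}\ \text{or}\ \nu(T(l_{x,y},r))>r^{\kappa}\},$$
and I would show $\mu\times\nu(B_r)\leq r^{\kappa'}$ for a suitable $\kappa'=\kappa'(s_\mu,s_\nu)>0$, so that $\sum_r \mu\times\nu(B_{2^{-k}})$ over dyadic $r$ still leaves a set $G$ of positive measure after one further mass-pigeonholing.

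The heart of the matter is the bound on $\mu\times\nu(B_r)$. Here is the mechanism I would use. If $\mu(T(l_{x,y},r))$ is large, then the direction of $l_{x,y}$ (equivalently $\pi^{x}(y)\in S^1$) must lie in a small set: conditioned on $x$, the map $y\mapsto \pi^x(y)$ pushes $\nu$ forward to a measure on $S^1$, and for it to have an $r^\kappa$-concentrated tube through $x$ that also carries much $\mu$-mass is a constrained event. Concretely, I would invoke the radial-projection heuristic behind \eqref{Orponen-half}/\eqref{Bourgain-half}, or more elementarily a Fubini argument: integrating the bad event in $x$ first, for each direction $e$ the set of $x$ with $\mu(T(l_{x,e},r))>r^\kappa$ and $x$ ranging over $E$ is contained in a union of $\lesssim r^{\kappa-s_\mu}$ tubes (by the ball condition on $\mu$ applied along the tube), hence has $\mu$-measure at most a constant; then integrating in $y$ over the $\nu$-measure of those $x$ that see a fixed bad tube, the disjointness/transversality coming from ``$\mu,\nu$ not in a line'' (again Lemma \ref{lemma-not-in-line}, applied to the angular spread) forces the surviving $\nu$-mass to be $\leq r^{\kappa'}$. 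The bookkeeping that turns ``not in a line'' into a polynomial-in-$r$ gain is exactly the discretized non-concentration that Lemma \ref{discretize-fractal} packages, so I would pass to $(\delta_k,\sigma)$-set approximations of $E,F$ with $\sigma$ slightly below $\min\{s_\mu,s_\nu\}$ (or below $1$, whichever governs), run the counting at each hyperdyadic scale, and sum the geometric series in $\epsilon$.

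After the scale-by-scale bound, the assembly is routine: choose $\kappa$ so small that $\kappa'>0$ and $\sum_{k\geq k_0}2^{-k\kappa'}<\tfrac12\mu\times\nu(E\times F)$ for $k_0$ large, set $G_0:=(E\times F)\setminus\bigcup_{k\geq k_0}B_{2^{-k}}$, so $\mu\times\nu(G_0)>0$, and for the finitely many scales $r>2^{-k_0}$ absorb the at-worst-bounded loss into the constant $C=C(\mu,\nu)$ (for $r$ bounded below, $\mu(T(l,r))\leq 1\leq C r^\kappa$ trivially). Finally take $G$ to be a compact subset of $G_0$ of positive measure, using inner regularity of $\mu\times\nu$; the continuity of $(x,y)\mapsto l_{x,y}$ on the compact set where $x\neq y$ makes the defining inequalities closed-ish enough that no extra care is needed beyond shrinking slightly. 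I expect the main obstacle to be the transversality step — making precise the claim that for a fixed bad tube the set of pins $x\in E$ that ``over-see'' it along $\mu$ is itself geometrically small, and doing so with a genuinely quantitative ($r^{\kappa'}$) saving rather than merely an $o(1)$ one; this is where the hypothesis $\mu(l)=\nu(l)=0$ must be used in its strong, discretized form rather than just qualitatively.
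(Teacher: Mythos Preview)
Your overall scaffolding---remove bad pairs scale by scale and sum---is right, but the ``core estimate'' $\mu\times\nu(B_r)\le r^{\kappa'}$ cannot be obtained the way you suggest, and this is a genuine gap. You yourself flag that the hypothesis $\mu(l)=\nu(l)=0$ must be used ``in its strong, discretized form rather than just qualitatively,'' but there is no such form: Lemma~\ref{lemma-not-in-line} is purely qualitative, and a Frostman measure can satisfy $\mu(l)=0$ for all lines yet have $\mu(T(l,r))$ decay arbitrarily slowly in $r$. Your Fubini sketch does not escape this: after bounding the number of essentially distinct heavy tubes through a fixed $y$ by $O(r^{-O(\kappa)})$, you still have no polynomial control on the $\mu$-mass (or the $\nu$-mass) carried by those tubes, so the integral over $y$ does not gain $r^{\kappa'}$. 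Lemma~\ref{discretize-fractal} is about covering sets of small Hausdorff dimension and gives you nothing here.

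The paper avoids this obstacle by a bootstrap over hyperdyadic scales rather than a direct bound on $B_r$. At scale $\delta_k$ it first extracts a \emph{maximal} family of bad tubes of cardinality $\le 2\delta_k^{-\eta}$ (via Lemma~\ref{pigeonholing-lemma}), removes the small set of points associated to two well-separated maximal tubes, and then removes $H_k$, which consists of the $\delta_k^{\rho/2}$-thickenings of the maximal tubes crossed with their associated points. The crucial observation is that $\delta_k^{\rho/2}=\delta_{k-\Gamma}$ for a fixed $\Gamma$, so for $k\ge k_0+\Gamma$ the set $H_k\setminus\bigcup_{j<k}H_j$ consists of pairs already known to be good at scale $\delta_{k-\Gamma}$, giving the polynomial bound $\delta_{k-\Gamma}^{\eta}$ \emph{from the construction itself}. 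The qualitative input from Lemma~\ref{lemma-not-in-line} is used only for the finitely many initial scales $k_0\le k\le k_0+\Gamma$, where the loss is absorbed by choosing $k_0$ large. Your proposal is missing both the maximal-tube reduction and this self-improving mechanism; without them the summation over scales does not converge.
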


The proof is inspired by Orponen's argument on \eqref{Orponen-half}. We also simplify his framework. Although it somewhat looks weaker than that in \cite{Orp19} (see Lemma 2.2, 2.3 there), it is not less powerful. In Section \ref{app-tube-condition} we shall see that \eqref{Orponen-half} easily follows from Proposition \ref{prop-tube-condition}. Generally speaking, if $x_1, x_2\in E$, $y\in F$, or $x\in E$, $y_1, y_2\in F$, lie in a tube, we apply Proposition \ref{prop-tube-condition}, otherwise there is transversality. This tube condition may have its own interest. 

As a remark, improvement on the value of $\kappa$ does not improve anything else in this paper. In contrast, when $\dH E, \dH F>1$, the quantitative estimate on \eqref{sharp-radial-projection} in \cite{Orp19} gives a tube condition for any $\kappa<1$, whose value does matter in recent work on the distance problem (see \cite{KS18}, \cite{GIOW18}, \cite{Shm18}).  

\begin{proof}
	[Proof of Proposition \ref{prop-tube-condition}]
Let $\eta=\eta(s_\mu, s_\nu)>0$, $\rho=\rho(s_\mu, s_\nu)>0$ be positive constants that will be determined later. It suffices to find $k_0=k_0(\mu, \nu)>0$ such that $$\mu\left(T(l_{x,y}, \delta_k)\right)\leq \delta_k^{\eta},\ k=k_0,k_0+1,\dots.$$
Similarly it holds on $\nu$.

	Throughout this proof, denote $\delta=\delta_{k}$.  We say a $\delta$-tube $T$ is bad if $\mu(T)>\delta^{\eta}$ and a point $y\in F$ is bad if there exists a bad tube $T(l_y, \delta)$.
Denote by $\textbf{Bad}_\textbf{P}$ the set of bad points in $F$ and by $\textbf{Bad}_\textbf{T}$ the set of bad $\delta$-tubes passing through points in $\textbf{Bad}_\textbf{P}$.

\subsection{}\label{tube-condition-step-1}We shall find a subset $\textbf{M-Bad}_\textbf{T}\subset \textbf{Bad}_\textbf{T}$ and a small number ${\rho}>0$ such that
\begin{enumerate}[(1)]
\item $\#(\textbf{M-Bad}_\textbf{T})\leq 2\delta^{-{\eta}}$;
\item for any $y\in \textbf{Bad}_\textbf{P}$ and any bad tube $T(l_y, \delta)$, there exists (at least one) $T(l_{y, M},\delta)\in \textbf{M-Bad}_\textbf{T}$ such that \begin{enumerate}[(a)]\item $y\in T(l_{y, M}, \delta^{\rho})$, and \item  the angle between central lines $l_{y, M}$ and $l_y$ is $<\delta^{\rho}$.
\end{enumerate}
\end{enumerate}

For convenience we denote $T_y=T(l_y,\delta)$ and $T_{y, M}=T(l_{y, M},\delta)$. We say $y$ and $T=T_{y, M}$ are associated, , denoted by $y\sim T$, if there exists a bad tube $T_y$ such that the (2a), (2b) hold.

Now we construct $\textbf{M-Bad}_\textbf{T}$. Take $\textbf{M-Bad}_\textbf{T}\subset \textbf{Bad}_\textbf{T}$ as a maximal subset such that for any $T, T'\in \textbf{M-Bad}_\textbf{T}$, $T\neq T'$,
$$\mu(T\cap T')< \delta^{2{\eta}}/2. $$
By the pigeonholing lemma (Lemma \ref{pigeonholing-lemma}),
$$\#(\textbf{M-Bad}_\textbf{T})\leq 2\delta^{-{\eta}}. $$

Now we check (2a) and (2b). By the maximality, it follows that, for any $y\in \textbf{Bad}_\textbf{P}$ and any bad tube $T(l_y, \delta)\in \textbf{Bad}_\textbf{T}$, there exists $T_{y, M}\in \textbf{M-Bad}_\textbf{T}$ such that
\begin{equation}\label{Maximality-MBad-P}\mu(T_y\cap T_{y, M})\geq\delta^{2{\eta}}/2.\end{equation}

If (2b) fails, the intersection between $T_{y, M}$ and $T_y$ is contained in a ball of radius $\delta^{1-{\rho}}$. By the ball condition on $\mu$,
	$$\mu (T_y\cap T_{y, M})\leq c_\mu \delta^{s_\mu(1-{\rho})},$$
	which is $< \delta^{2{\eta}}/2$, thus contradicts \eqref{Maximality-MBad-P}, if $\eta, \rho$ are chosen to satisfy
	\begin{equation}\label{const-track-1-1}s_\mu(1-{\rho})>2{\eta}.\end{equation}

Similarly, if (2a) fails, then $l_{y, M}$ and any $l_y$ are $\delta^{\rho}$-separated, which leads to the same contradiction. 

\subsection{}\label{tube-condition-step-2}
Remove a negligible subset $E\times\textbf{BadBad}_\textbf{P}$ from $E\times F$.

From \ref{tube-condition-step-1} each $y\in \textbf{Bad}_\textbf{P}$ is associated with one (or more) $T_{y, M}\in \textbf{M-Bad}_\textbf{T}$ and $\#(\textbf{M-Bad}_\textbf{T})\leq 2\delta^{-{\eta}}$. Denote by $\textbf{BadBad}_\textbf{P}$ the set of $y\in \textbf{Bad}_\textbf{P}$ that is associated with two tubes in $\textbf{M-Bad}_\textbf{T}$ that are $\delta^{{\rho}/2}$-separated, say $T_{y, M_1}$ and $T_{y, M_2}$. 

To see it is negligible, by (2a) from \ref{tube-condition-step-1}, for any $y\in \textbf{BadBad}_\textbf{P}$,
$$y\in T(l_{y, M_1}, \delta^{\rho}) \cap T(l_{y, M_2}, \delta^{\rho})$$
contained in a ball of radius $\delta^{{\rho}/2}$. Then by $(1)$ from Subsection \ref{tube-condition-step-1} and the ball condition on $\nu$,
$$\nu(\textbf{BadBad}_\textbf{P})\leq 4c_\nu \,\delta^{-2{\eta}+s_\nu{\rho}/2}.$$
Therefore $E\times\textbf{BadBad}_\textbf{P}$ is negligible to $E\times F$, even after taking sum over $k\geq k_0$, if $k_0$ is large enough and $\eta, \rho$ are chosen to satisfy
\begin{equation}\label{const-track-1-2}
-2{\eta}+s_\nu{\rho}/2>0.
\end{equation}

\subsection{}\label{tube-condition-step 3}
Finally we remove 
$$H_k:=\left\{T(l_{y, M},\delta^{\rho/2})\times\{y\}: y\in F\backslash\textbf{BadBad}_\textbf{P}\right\}.$$
We shall show remaining pairs are always good and $\cup_{k\geq k_0} H_k$ is small to $E\times F$.

First we show $$\mu\left(T(l_{x,y}, \delta)\right)\leq \delta^{\eta}$$
for any remaining pair $(x,y)$. It is easy from our construction. If it is bad, there exists a $T_{y,M}$ satisfying (2a), (2b). Since $\textbf{BadBad}_\textbf{P}$ has been eliminated, $T(l_{x,y},\delta)$ must lie in the $\delta^{\rho/2}$-neighborhood of $l_{y, M}$. But it is impossible because $T(l_{y, M},\delta^{\rho/2})\times\{y\}$ has been removed.

Now we show $\cup_{k\geq k_0} H_k\ll 1$ when $k_0$ is large enough. 

Denote $\Gamma=\Gamma(\epsilon, {\rho})$ such that ${\rho}/2=(1+\epsilon)^{-\Gamma}$. Then the radius of tubes in $H_k$ becomes $\delta_{k-\Gamma}$. Notice both $\epsilon$ and ${\rho}$ are independent in $k$, so is $\Gamma$. 

For any $k_0\leq k\leq k_0+\Gamma$, there is no trick so we use the trivial bound
$$\mu\times\nu\left(\bigcup_{k_0\leq k\leq k_0+\Gamma} H_k\right)\leq \Gamma\cdot \sup_l\mu(T(l, \delta_{k_0-\Gamma})) $$

When $k\geq k_0+\Gamma$, instead of looking at each $H_k$, we consider $H_k\backslash\cup_{j<k}H_j$. Since $H_{k-\Gamma}$ is removed, all remaining pairs are good in the scale $\delta_{k-\Gamma}$, thus
$$\mu\times\nu\left(H_k\backslash\cup_{j<k}H_j\right)\leq \delta_{k-\Gamma}^\eta.$$

Above all,
$$\mu\times\nu\left(\bigcup_{k\geq k_0} H_k\right)\leq \Gamma\cdot \sup_l\mu(T(l, \delta_{k_0-\Gamma}))+\delta_{k_0-\Gamma}^\eta.$$
Since $\Gamma, \eta>0$ are fixed constants independent in $k_0$, by Lemma \ref{lemma-not-in-line} 
$$\mu\times\nu\left(\bigcup_{k\geq k_0} H_k\right)\ll 1 $$
when $k_0$ is large enough.

\subsection{}\label{tube-condition-step 4}
One can easily choose $\rho, \eta$ to ensure that \eqref{const-track-1-1}, \eqref{const-track-1-2} hold. Hence the proof is complete with
$$G:=\left(E\times (F\backslash \textbf{BadBad}_\textbf{P})\right)\backslash\cup_{k\geq k_0}H_k.$$

\end{proof}

\section{Proof of Theorem \ref{main-thm-pin-radial-proj}}
Suppose $\dH S(E)<\sigma$. Then by Lemma \ref{discretize-fractal} there exists $\epsilon>0$ such that for any $k_0>0$ there exists a family of $(\delta_k(\epsilon),\sigma)$-sets $X^\sigma_k$, $k\geq k_0$, that cover $S(E)$. Without loss of generality we work on $E, F$, $\dist(E, F)>0$, $\dH E=\dH F$, and only consider directions determined by pairs $(x, y)\in E\times F$. Let $\mu, \nu$ be Frostman measures on $E, F$ with $s_\mu=s_\nu=s$, and $G$ be as in Proposition \ref{prop-tube-condition}.

We shall find $\epsilon_0=\epsilon_0(s)>0$ such that, for any $\sigma<\frac{\dH E}{2}+\epsilon_0$, there exists $\beta>0$ such that for any $(\delta, \sigma)$-set $X\subset S^1$,
\begin{equation}\label{summable}\mu\times\nu\{(x,y)\in G:S(x,y)\in X\}\lesssim\delta^\beta.\end{equation}
It it holds, then for any $k_0>0$,
$$0<\mu\times\nu(G)\leq \sum_{k\geq k_0}\mu\times\nu\{(x,y)\in G:S(x,y)\in X_k^\sigma\}\leq C \sum_{k\geq k_0}\delta_k^\beta, $$
a contradiction. Therefore $\dH S(E)\geq \frac{\dH E}{2}+\epsilon_0$.

It remains to prove \eqref{summable}. By Cauchy-Schwarz a couple of times, it suffices to consider
$$\mu^2\times\nu^4\{(x_1, x_2, y_1, y_2, y_3, y_4): (x_i, y_j)\in G,\, S(x_i, y_j)\in X,\, i=1,2,\, j=1,2,3,4\}.$$
Notice that each $x_i$ is contained in four $(\delta, \sigma)$-pencils with tips $y_j$, and each $y_j$ is contained in two pencils with tips $x_i$.

Let $0<\rho\ll 1\ll n<\infty$ be positive constants that will be determined later.

First it suffices to consider $\dist (x_1, x_2), \dist (y_j, y_{j'})\geq \delta^{\rho/n}, \forall \, j\neq j'$, otherwise \eqref{summable} would follow from the ball condition on Frostman measures. 

By our tube condition Proposition \ref{prop-tube-condition}, we may assume triples $x_1, x_2, y_j$, as well as triples $x_i, y_j, y_{j'}$, do not lie in a tube of radius $\delta^{2\rho/n}$, equivalently $$\dist (l_{x_1, x_2}, y_j), \,\dist (l_{y_j, y_{j'}}, x_i)> \delta^{\rho/n}.$$

From now we fix $y_1, y_2$.

If one of $y_3, y_4$ does not lie in $T(l_{y_1, y_2}, \delta^{\rho})$, we fix $y_3, y_4$ as well. The rescaled version of our pencil estimate Theorem \ref{main-pencil} implies that $x_i$ is contained in the union of $\lesssim \delta^{-2\sigma+\epsilon_0-C\rho}$ balls of radius $\delta$. 
Therefore by the ball condition on $\mu$, the measure of each $x_i$ is
\begin{equation}\label{balance-1}\lesssim \delta^{s-2\sigma+\epsilon'_0(\sigma)-C\rho}.\end{equation}

It remains to consider the case $y_3, y_4\in T(l_{y_1, y_2}, \delta^{\rho})$. Fix $x_1, x_2$, then $y_3, y_4$ lie in the intersection of two $(\delta, \sigma)$-pencils $P_{x_1}, P_{x_2}$. From previous steps we have $\dist (x_1, x_2)\geq \delta_k^{\rho/n}$ and $\dist (l_{x_1, x_2}, y_j)> \delta^{\rho/n}$, $j=1,2,3,4$. Therefore $l_{x_1, y_j}$ and $l_{x_2, y_j}$ are always $\delta^{2\rho/n}$-separated. Then the non-concentration condition on our pencils guarantees that
$$T(l_{y_1, y_2}, \delta^{\rho})\cap P_{x_1}\cap P_{x_2}$$
can be covered by $\lesssim \delta^{-2\sigma+(\rho-2\rho/n)\sigma}$ balls of radius $\delta^{1-2\rho/n}$. See the figure below. Therefore the measure of each $y_j$, $j=3, 4$, is
\begin{equation}\label{balance-2}\lesssim \delta^{s-2\sigma+ \rho\sigma- 2(\sigma+s)\rho/n}.\end{equation}
\begin{figure}[H]
\centering
\begin{tikzpicture}[scale=0.25, p2/.style={line width=0.275, black}, p3/.style={line width=0.15, black!50!white}]
\draw (0,0) node[anchor=north east]{$x_1$};
\draw (16,-1) node[anchor=north west]{$x_2$};
\draw (-4, 5) rectangle (20, 9);
\draw (20, 7) node[anchor = west]{$T(l_{y_1, y_2}, \delta^\rho)$};
\fill (7.5, 7.5) circle (0.3) node[anchor = north]{$y_i$};
\foreach \x in {2,...,4}{
\foreach \y in {2,...,4}{ 
	\draw[line width = 1] (0,0) -- (2^\x, 2^\y);\draw[line width = 1] (16,-1) -- (-2^\x+16, 2^\y-1); }}
\end{tikzpicture}
\end{figure}
Hence \eqref{summable} follows with a desired $\epsilon_0=\epsilon_0(s)>0$, by choosing $n$ large and $\rho>0$ to balance \eqref{balance-1}, \eqref{balance-2}.

\section{Proof of Theorem \ref{main-thm-pin-radial-proj}}
We may assume $F$ does not lie in a line, otherwise $(ii)$ in Theorem \ref{main-thm-pin-radial-proj} always holds.

Now we consider $\dH\pi^y(E)$. The idea is the same as the last section, with the $(\delta, \sigma)$-set $X$ in \eqref{summable} replaced by a family of $(\delta, \sigma)$-sets $X_y$. The only obstacle for this pinned version is, when $y_1, y_2, y_3, y_4$ lie in a tube of radius $\delta_k^\rho=\delta_{k-\Gamma}$ that is away from $E$, there is no way to control the measure of this tube. If we know this tube has measure $\lesssim \delta_{k-\Gamma}^\beta$ for any $k\geq k_0$, the pinned version follows. Otherwise we have a sequence of heavy tubes in $F$ away from $E$, which gives 
$$\overline{\dim}_{\mathcal{M}}\, \pi^x(F)= \dH F,\ x\in E.$$

\section{An alternative proof of \eqref{Orponen-half}}\label{app-tube-condition} 
Suppose $E, F$ are Borel sets in the plane, $\dH E, \dH F>0$, associated with Frostman measures $\mu,\nu$ respectively, and $\mu(l)=0$ for any line $l$. We shall show that $\dH \pi^y (E)\geq \frac{\dH E}{2}$ for some $y\in F$. 

As in \cite{Orp19} and many others, it suffices to show that, for any family of $\delta$-arcs $I_{y, i}\subset S^1$, $i=1,2,\dots, \delta^{-\sigma}$, $\sigma<\frac{\dH E}{2}$, there exists $\beta>0$ such that
$$\mu\times\nu\{(x,y)\in E\times F: \pi^y(x)\in \bigcup_i I_{y,i}\}\lesssim\delta^\beta.$$

By Cauchy-Schwarz it suffices to consider
$$\mu\times\nu\times\nu\{(x,y_1, y_2)\in E\times F\times F: \pi^{y_j}(x)\in \bigcup_i I_{y_j,i},\,j=1, 2\}.$$

When $F$ lies in a line, we may assume $\mu$ is away from this line. It is then trivial: if $y_1, y_2$ are close, it follows from the ball condition on $\nu$, otherwise it follows from the transversality between two pencils centered at $y_1, y_2$. A similar argument shows that \eqref{Bourgain-half} is trivial as well.

Suppose $F$ does not lie in a line, then by Proposition \ref{prop-tube-condition} it suffices to consider $(x, y)\in G$ from the beginning and look at
$$\mu\times\nu\times\nu\{(x,y_1, y_2): (x, y_j)\in G,\  \pi^{y_j}(x)\in \bigcup_i I_{y_j,i},\, j=1, 2\}.$$
By the ball condition on $\nu$, we may assume $y_1, y_2$ are separated. If $x$ lies in the $\delta^{\rho/n}$-neighborhood of $l_{y_1, y_2}$, it follows from the tube condition, otherwise it follows from the transversality between two pencils centered at $y_1, y_2$. 

Notice in this argument we do not need any non-concentration condition on $\cup_i I_{y, i}$.

\bibliographystyle{alpha}
\bibliography{/Users/MacPro/Dropbox/Academic/paper/mybibtex.bib}

\end{document}